\font\smallit=cmti10
\renewcommand\section{\@startsection {section}{1}{\z@}
 {-30pt \@plus -1ex \@minus -.2ex}
 {2.3ex \@plus.2ex}
 {\normalfont\normalsize\bfseries}}
\renewcommand\subsection{\@startsection{subsection}{2}{\z@}
 {-3.25ex\@plus -1ex \@minus -.2ex}
 {1.5ex \@plus .2ex}
 {\normalfont\normalsize\bfseries}}
\renewcommand{\@seccntformat}[1]{\csname the#1\endcsname. }
\newtheorem{theorem}{Theorem}
\newtheorem{lemma}{Lemma}
\newtheorem{example}{Example}
\newtheorem*{remark}{Remark}
\numberwithin{equation}{section}
\begin{document}


\begin{center}
\uppercase{\bf On some generalizations of mean value theorems for arithmetic functions of two variables}
\vskip 20pt
{\bf Noboru Ushiroya}\\
{\smallit National Institute of Technology, Wakayama College, \\ Gobo, Wakayama, Japan}\\
{\tt ushiroya@wakayama-nct.ac.jp}\\ 
\vskip 10pt
\end{center}
\vskip 30pt
\vskip 30pt


\centerline{\bf Abstract}

\noindent
Let $f: \mathbb{N}^2 \mapsto \mathbb{C}$  be an arithmetic function of two variables. 
We study the existence of the limit: 
  \[\displaystyle  \lim_{x  \to  \infty}  \frac{1}{x^2 (\log x)^{k-1}} \sum_{n_1 ,  n_2 \le x} f  (n_1,  n_2)  \]
where $k$ is a fixed positive integer. Moreover, we express this limit as an infinite product over all prime numbers in the case that $f$ is a multiplicative function of two variables. This study is a generalization of Cohen-van der Corput's results to the case of two variables.

\pagestyle{myheadings} 
 \thispagestyle{empty} 
 \baselineskip=12.875pt 
 \vskip 30pt 


\section{Introduction}
Let $\mu$ denote the $\mathrm{the \ M \ddot{o} bius}$ function and let $\mu_k=\underbrace{\mu * \mu * \cdots * \mu}_{k}$ be the $k-$folded Dirichlet convolution of $\mu$, that is, $ \mu_k (n)=\sum_{d_1 d_2 \cdots d_k=n} \mu(d_1) \mu(d_2) \dots \mu(d_k)$ for every $n$. Cohen \cite{co2} proved that if $f:\mathbb{N} \mapsto \mathbb{C}$ is an arithmetic function satisfying $\sum_{n=1}^\infty |(f* \mu_k)(n)| / n < \infty$, then 
\begin{equation}
\displaystyle  \lim_{ x \to \infty} \frac{1}{x (\log x)^{k-1} } \sum_{n \le x} f(n)=\frac{1}{(k-1)!} \sum_{n=1}^\infty \frac{(f* \mu_k)(n)}{n} . 
\end{equation}

Van der Corput \cite{van} proved that if $f:\mathbb{N} \mapsto \mathbb{C}$ is a multiplicative function satisfying $ \prod_{p \in  \mathcal{P}} ( \sum_{\nu=0}^\infty |(f* \mu_k)(p^\nu)| / p^\nu )< \infty$ where $\mathcal{P}$ is the set of prime numbers, then
\begin{equation}
\lim_{ x \to \infty} \frac{1}{x (\log x)^{k-1} } \sum_{n \leq x} f(n)=\frac{1}{(k-1)!} \prod_{p \in \mathcal{P}} \Bigl(1-\frac{1}{p}\Bigr)^k \Bigl(\sum_{\nu=0}^{\infty} \frac{f(p^\nu)}{p^\nu} \Bigr).   
\end{equation}
We would like to generalize these results to the case in which $f$ is an arithmetic function of two variables and obtain several interesting examples. 

Let $\gcd (n_1,n_2)$ denote the greatest common divisor of $n_1$ and $n_2$, $ \sigma (n)$ the sum of divisors of $n$, and $\varphi (n)$ Euler's totient function. Cohen \cite{co3} proved that
\begin{equation}
 \sum_{n_1,n_2 \le x} \sigma (\gcd (n_1,n_2))= x^2 \Bigl(\log x +2 \gamma -\frac{1}{2}-\frac{\zeta(2)}{2} \Bigr)+O(x^{\frac{3}{2}} \log x), 
\end{equation}
\begin{equation}
 \sum_{n_1,n_2 \le x} \varphi (\gcd (n_1,n_2))= \frac{x^2}{\zeta^2(2)} \Bigl(\log x +2 \gamma -\frac{1}{2}-\frac{\zeta(2)}{2}-\frac{2 \zeta'(2)}{\zeta(2)} \Bigr)+O(x^{\frac{3}{2}} \log x),
\end{equation}
where $\zeta(n)$ is the Riemann zeta function. 

Next we consider two functions $s$ and $c$, where $ s(n_1,n_2)= \sum_{d_1|n_1,d_2|n_2} \mathrm{gcd} (d_1,d_2)$ and $ c(n_1,n_2)= \sum_{d_1|n_1,d_2|n_2} \varphi(\mathrm{gcd} (d_1,d_2))$. Nowak and T$\acute{\rm{o}}$th \cite{no} proved that
\begin{equation}
\sum_{n_1,n_2 \le x} s  (n_1,n_2)= \frac{2}{\pi^2} x^2 (\log^3 x +a_1 \log^2 x+a_2 \log x+a_3)+(x^{\frac{1117}{701}+\varepsilon}) , 
\end{equation}
\begin{equation}
\sum_{n_1,n_2 \le x} c (n_1,n_2)= \frac{12}{\pi^4} x^2 (\log^3 x +b_1 \log^2 x+b_2 \log x+b_3)+(x^{\frac{1117}{701}+\varepsilon}), 
\end{equation}
where $a_1,a_2,a_3,b_1,b_2,b_3$ are explicit constants. 

We would like to obtain these leading coefficients in (1.3) $\sim$ (1.6) by a systematic method. We will calculate those leading coefficients in Example 3, 4, 7 and 8 in Section 5. Although we cannot obtain remainder terms by our theorems, our method for obtaining leading terms is very simple and is applicable to many arithmetic functions of two variables.

\section{Some Results}
Let $\tilde{\mu} (n_1,n_2)$ denote the Dirichlet inverse of the $\mathrm{gcd}$ function, that is, $\tilde{\mu}$ is the function which satisfies $(\tilde{\mu} * \mathrm{gcd})(n_1,n_2)= \delta (n_1,n_2)$ for every $n_1,n_2 \in \mathbb{N}$, where $\delta(n_1,n_2)=1$ or $0$ according to whether $n_1=n_2=1$ or not.
Let $x \wedge y$ denote $\min(x,y)$. We first establish the following theorem.

\begin{theorem} Let $f$ be an arithmetic function of two variables satisfying
\begin{equation} \displaystyle \sum_{n_1,n_2=1}^{\infty} \frac{|(f*\tilde{\mu})(n_1,n_2)|}{n_1 n_2}< \infty . \end{equation} 
Then we have 
\begin{equation}
 \lim_{x,y \to \infty} \frac{1}{xy  \log x \wedge y} \sum_{n_1 \le x ,  n_2 \le y} f (n_1,n_2) =  \frac{1}{\zeta (2)} \sum_{n_1,  n_2=1}^{\infty} \frac{(f*\tilde{\mu})(n_1,n_2)}{n_1 n_2} . \label{eq:th1}
\end{equation}
\end{theorem}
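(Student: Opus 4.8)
The plan is to reduce the double sum to a sum involving the arithmetic function $g := f * \tilde\mu$, for which the gcd function acts as the "inverse" operation. Since $f = g * \mathrm{gcd}$ by definition of $\tilde\mu$, we may write
\begin{equation}
\sum_{n_1 \le x,\, n_2 \le y} f(n_1,n_2) = \sum_{n_1 \le x,\, n_2 \le y} \sum_{\substack{d_1 e_1 = n_1 \\ d_2 e_2 = n_2}} g(d_1,d_2)\, \mathrm{gcd}(e_1,e_2) = \sum_{d_1 \le x,\, d_2 \le y} g(d_1,d_2) \sum_{\substack{e_1 \le x/d_1 \\ e_2 \le y/d_2}} \mathrm{gcd}(e_1,e_2).
\end{equation}
So the first key step is to obtain a good asymptotic for the inner sum $S(u,v) := \sum_{e_1 \le u,\, e_2 \le v} \mathrm{gcd}(e_1,e_2)$. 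Using $\mathrm{gcd} = \mathrm{Id} * \varphi$ in each coordinate (or rather $\mathrm{gcd}(e_1,e_2) = \sum_{d \mid \gcd(e_1,e_2)} \varphi(d)$), one gets $S(u,v) = \sum_{d} \varphi(d) \lfloor u/d \rfloor \lfloor v/d \rfloor \sim uv \sum_d \varphi(d)/d^2 \cdot$ (a logarithmic factor); more precisely, the main term behaves like $\frac{1}{\zeta(2)} uv \log(u \wedge v) + O(uv)$ by a Dirichlet-hyperbola-type estimate, since $\sum_{d \le u \wedge v} \varphi(d)/d^2 = \frac{1}{\zeta(2)} \log(u \wedge v) + C + o(1)$. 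This is the analytic heart of the argument and where the $\frac{1}{\zeta(2)}$ and the $\log x \wedge y$ in the statement come from.

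Next I would substitute this asymptotic back. Plugging $S(x/d_1, y/d_2) \approx \frac{1}{\zeta(2)} \frac{xy}{d_1 d_2} \log\!\big(\tfrac{x}{d_1} \wedge \tfrac{y}{d_2}\big)$ and dividing by $xy \log(x \wedge y)$, the summand becomes $\frac{g(d_1,d_2)}{\zeta(2)\, d_1 d_2} \cdot \frac{\log(x/d_1 \wedge y/d_2)}{\log(x \wedge y)}$. For fixed $d_1, d_2$, as $x, y \to \infty$ the ratio of logarithms tends to $1$, so formally the limit is $\frac{1}{\zeta(2)} \sum_{d_1, d_2} \frac{g(d_1,d_2)}{d_1 d_2}$, which is exactly the claimed expression. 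The hypothesis (2.1), namely $\sum |g(d_1,d_2)|/(d_1 d_2) < \infty$, is what makes the series converge and, more importantly, what justifies interchanging the limit with the (now infinite) sum.

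The main obstacle — and the step requiring genuine care — is controlling the tail and the error terms uniformly. One must split the sum over $d_1 \le x,\, d_2 \le y$ into a "main" part with $d_1, d_2 \le T$ for a slowly growing parameter $T = T(x,y)$, and a tail with $\max(d_1,d_2) > T$. On the main part one uses the asymptotic for $S$ with its $O(uv)$ error, checks that $\sum_{d_1,d_2 \le T} |g(d_1,d_2)|/(d_1 d_2) \cdot \frac{1}{\log(x\wedge y)} \to 0$ handles the contribution of the $O$-term (here one may need a crude bound on partial sums of $|g|$, or one simply notes $\sum_{d_1,d_2} |g|/(d_1 d_2)$ converges so the $O(xy)$ pieces contribute $o(xy\log(x\wedge y))$), and uses dominated convergence on the ratio of logs. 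On the tail, one needs the trivial bound $S(u,v) \le \sum_{e_1 \le u, e_2 \le v} \gcd(e_1,e_2) = O(uv \log(2+u\wedge v))$ together with $\sum_{\max(d_1,d_2) > T} |g(d_1,d_2)|/(d_1 d_2) \to 0$ as $T \to \infty$. Balancing $T$ against $x, y$ so that both the main-term error and the tail are $o(\log(x\wedge y))$ after normalization is the delicate bookkeeping; everything else is routine manipulation of Dirichlet convolutions.
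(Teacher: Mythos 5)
Your proposal is correct, and its skeleton is the same as the paper's: write $f=(f*\tilde{\mu})*\gcd$, interchange the order of summation, insert an asymptotic for the inner gcd-sum, and use the absolute-convergence hypothesis on $\sum |(f*\tilde{\mu})(n_1,n_2)|/(n_1n_2)$ to pass to the limit. Where you differ is in the two supporting ingredients. For the gcd-sum, the paper's Lemma 2 gives only $\sum_{n_1\le u,\,n_2\le v}\gcd(n_1,n_2)=\frac{1}{\zeta(2)}uv\log (u\wedge v)+o\bigl(uv\log (u\wedge v)\bigr)$, obtained by counting coprime pairs via a mean-value theorem from the author's earlier paper; you instead derive it self-containedly from $\gcd=\varphi*\mathbf{1}$, with the sharper and uniform error $O(uv)$, together with the uniform trivial bound $O\bigl(uv\log(2+u\wedge v)\bigr)$. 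For the limit interchange, the paper packages the argument as Lemma 3 (on sums weighted by $\log \frac{x}{n_1}\wedge\frac{y}{n_2}$), while you do an explicit main/tail splitting at a threshold $T$. Your quantitative error term actually makes the bookkeeping easier than you fear: no balancing of $T$ against $x,y$ is needed --- fix $T$, let $x,y\to\infty$, then let $T\to\infty$ --- and it also cleanly handles the uniformity point that the paper's write-up glosses over when it applies Lemma 2 termwise with arguments $x/n_1$, $y/n_2$ that need not be large.
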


The proof of Theorem 1 will be given in the next section. To proceed to the next theorem, we need some notations. Let  
\[\tau_k(n_1,n_2)=(\underbrace{\bold{1} * \bold{1} * \cdots * \bold{1}}_{k}) (n_1,n_2) \] 
stand for the $k-$folded Dirichlet convolution of the function $\bold{1}$, where $\bold{1}(n_1,n_2)=1$ for every $n_1,n_2 \in \mathbb{N}$. Let $\mu_k=\tau_k^{-1}$ denote the Dirichlet inverse of $\tau_k$. 
Note that $\mu_1(n_1,n_2)=\mu (n_1) \mu (n_2)$. Similarly, let 
\begin{align*}
\tilde{\tau}_1 (n_1,n_2)&= \mathrm{gcd} (n_1,n_2) ,  \\
\tilde{\tau}_k(n_1,n_2) &= (\underbrace{\bold{1} * \bold{1} * \cdots * \bold{1}}_{k-1} * \gcd ) (n_1,n_2)  \quad \mathrm{if} \quad k \ge 2 . 
\end{align*}
We also denote $\tilde{\mu}_k =\tilde{\tau}_k^{-1}$ the Dirichlet inverse of $\tilde{\tau}_k$. Note that $\tilde{\mu}_1=\tilde{\mu}=\gcd^{-1} $ and $\tilde{\mu}_k=\mu_{k-1} * \tilde{\mu}$ \ if \ $k \ge 2$. The next theorem is an extension of Cohen's theorem (1.1) to the case in which $f$ is an arithmetic function of two variables.

\begin{theorem} Let $f$ be an arithmetic function of two variables and let $k \in \mathbb{N}$. \\
{\rm{(i)}} Suppose
\begin{equation}
\displaystyle \sum_{n_1,n_2=1}^{\infty} \frac{|(f* \mu_k)(n_1,n_2)|}{n_1 n_2}< \infty . \label{eq:th2(i)1}
\end{equation}
Then we have
\begin{equation}
\displaystyle \lim_{x,y \to \infty} \frac{1}{xy (\log x  \log y)^{k-1} } \sum_{n_1 \le x ,  n_2 \le y} f (n_1,n_2) = C_k  \sum_{n_1,  n_2=1}^{\infty} \frac{(f*\mu_k)(n_1,n_2)}{n_1 n_2} ,
\label{eq:th2i}
\end{equation}
where $ \displaystyle C_k=\frac{1}{((k-1) !)^2} . $ \\
\\
{\rm{(ii)}} Suppose 
\begin{equation}
\displaystyle \sum_{n_1,n_2=1}^{\infty} \frac{|(f* \tilde{\mu}_k)(n_1,n_2)|}{n_1 n_2}< \infty . \label{eq:th2(ii)1}
\end{equation}
Then we have
\begin{equation}
\displaystyle \lim_{x \to \infty} \frac{1}{x^2 (\log x)^{2k-1} } \sum_{n_1 , n_2 \le x } f (n_1,n_2) = \tilde{C}_k  \sum_{n_1,  n_2=1}^{\infty} \frac{(f* \tilde{\mu}_k)(n_1,n_2)}{n_1 n_2}  ,
\label{eq:th2ii}
\end{equation}
where $ \displaystyle \tilde{C}_k=\frac{1}{\zeta(2)} \frac{1}{((k-1) !)^2 (2k-1)} . $ 
\end{theorem}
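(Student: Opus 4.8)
The plan is to reduce both parts to repeated application of Theorem 1 together with the one-variable Cohen estimate (1.1), using the structure $\mu_k = \mu_{k-1}*\mu_1$ and $\tilde{\mu}_k = \mu_{k-1}*\tilde{\mu}$ recorded in the excerpt. For part (i), write $g = f*\mu_k$, so that $f = g*\tau_k$. Since $\tau_k(n_1,n_2)$ factors as a product over the two coordinates only through the convolution of $\bold{1}$ in each variable, $\tau_k(n_1,n_2)=\tau_k^{(1)}(n_1)\,\tau_k^{(1)}(n_2)$ where $\tau_k^{(1)}$ is the ordinary $k$-fold divisor function; hence $f(n_1,n_2)=\sum_{d_1 e_1 = n_1,\ d_2 e_2 = n_2} g(d_1,d_2)\,\tau_k^{(1)}(e_1)\,\tau_k^{(1)}(e_2)$. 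Summing over $n_1\le x$, $n_2\le y$ and interchanging the order of summation gives
\begin{equation*}
\sum_{n_1\le x,\ n_2\le y} f(n_1,n_2) = \sum_{d_1,d_2} g(d_1,d_2) \Bigl(\sum_{e_1\le x/d_1}\tau_k^{(1)}(e_1)\Bigr)\Bigl(\sum_{e_2\le y/d_2}\tau_k^{(1)}(e_2)\Bigr).
\end{equation*}
By the classical estimate $\sum_{e\le t}\tau_k^{(1)}(e) = \frac{1}{(k-1)!}\,t(\log t)^{k-1}+O(t(\log t)^{k-2})$, the inner product is $\frac{1}{((k-1)!)^2}\frac{xy}{d_1 d_2}(\log(x/d_1))^{k-1}(\log(y/d_2))^{k-1}+(\text{lower order})$. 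Dividing by $xy(\log x\log y)^{k-1}$, observing $(\log(x/d_1))^{k-1}/(\log x)^{k-1}\to 1$ for each fixed $d_1$, and using the absolute convergence hypothesis \eqref{eq:th2(i)1} to justify dominated convergence on the $d_1,d_2$ sum, yields \eqref{eq:th2i} with $C_k = 1/((k-1)!)^2$. The main technical point here is a uniform tail bound: one must show the contribution of $d_1 d_2 > N$ is $o(1)$ uniformly in $x,y$, which follows because $(\log(x/d_1))^{k-1}\le (\log x)^{k-1}$ once $d_1\le x$ (and the $d_1>x$ range contributes nothing), so the ratio is bounded by $1$ and the tail is controlled by $\sum_{d_1 d_2>N}|g(d_1,d_2)|/(d_1 d_2)$.

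For part (ii), set $h = f*\tilde{\mu}_k$, so $f = h*\tilde{\tau}_k$ with $\tilde{\tau}_k = \tau_{k-1}*\gcd$ (interpreting $\tau_0 = \delta$ when $k=1$, which recovers Theorem 1 directly). The cleanest route is to peel off one factor of $\gcd$ and treat the remaining $\tau_{k-1}$-part as in (i): write $f = (h*\mu_{k-1}^{-1}? )$—more precisely, since $\tilde{\tau}_k = \bold{1}^{*(k-1)}*\gcd$, define $F = f * \mu_{k-1} $ (the $(k-1)$-fold two-variable Möbius convolution, so $F = h*\gcd$, i.e. $F*\tilde{\mu} = h$). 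Then apply the argument of part (i) to express $\sum_{n_1,n_2\le x} f$ in terms of $\sum_{n_1,n_2\le x/(\cdot)} F$ against two copies of $\tau_{k-1}^{(1)}$, and for $\sum_{m_1,m_2\le t} F(m_1,m_2)$ invoke Theorem 1, which gives $\sim \frac{t^2\log t}{\zeta(2)}\sum F*\tilde{\mu}/(n_1 n_2) = \frac{t^2 \log t}{\zeta(2)}\sum h/(n_1 n_2)$. Carrying the diagonal sum $\sum_{n_1,n_2\le x}$ through, the relevant integral becomes $\int\!\!\int_{d_1 d_2 \le x}(\log x/d_1 d_2)^{?}\cdots$; after the change of variables and integrating the convolution of the two $(\log)^{k-2}$ densities from part (i) against the extra $\log$ from Theorem 1, one gets $(\log x)^{2k-1}$ with the Beta-function constant $\int_0^1 u^{k-1}(1-u)^{k-1}\,du \cdot (\text{something})$; the factor $1/(2k-1)$ in $\tilde{C}_k$ is exactly what arises from $\int_0^1(1-v)^{2k-2}\,dv$ after collapsing the two-dimensional log-integral onto the diagonal, and the leftover $1/((k-1)!)^2$ and $1/\zeta(2)$ come respectively from the two $\tau_{k-1}^{(1)}$ sums and from Theorem 1.

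I expect the main obstacle to be bookkeeping the nested asymptotics in part (ii): one has a two-parameter sum $\sum_{d_1,d_2}h(d_1,d_2)/(d_1 d_2)$ weighted by $\bigl(\sum_{e\le x/d_1}\tau_{k-1}^{(1)}(e)\bigr)\bigl(\sum_{e\le x/d_2}\tau_{k-1}^{(1)}(e)\bigr)$ but then an outer diagonal constraint $n_1,n_2\le x$ couples the two coordinates, so the clean product structure of part (i) is broken and one must instead run the whole computation with $x=y$ from the start, tracking how $\sum_{n_1,n_2\le x}(\bold{1}*\cdots*\gcd)(n_1,n_2)$ behaves. Concretely, the key lemma needed is an asymptotic for $\sum_{n_1,n_2\le x}\tilde{\tau}_k(n_1,n_2)$ of the shape $\tilde{C}_k^{-1}$-normalized $x^2(\log x)^{2k-1}$, and once that is in hand a partial-summation/dominated-convergence argument against $h/(n_1 n_2)$ finishes it. Verifying that the constant works out to $\frac{1}{\zeta(2)((k-1)!)^2(2k-1)}$—in particular the provenance of the $(2k-1)$—is the delicate part; everything else is the same interchange-of-summation and tail-estimation machinery as in part (i) and in the proof of Theorem 1.
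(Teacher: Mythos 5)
Your part (i) is fine, and it is a genuinely different route from the paper's: you generalize Cohen's direct argument, writing $f=g*\tau_k$ with $\tau_k(n_1,n_2)=\tau_k^{(1)}(n_1)\tau_k^{(1)}(n_2)$, inserting the classical estimate $\sum_{e\le t}\tau_k^{(1)}(e)\sim t(\log t)^{k-1}/(k-1)!$, and using the bound $\sum_{e\le t}\tau_k^{(1)}(e)\ll t(1+\log t)^{k-1}$ to control the tail $d_1d_2>N$ via hypothesis (2.3); this avoids the paper's induction on $k$ and its convolution lemma (Lemma 8(i)) entirely, at the cost of importing the one-variable divisor asymptotic. The paper instead proves everything from scratch by induction, peeling off one factor of $\bold{1}$ at a time with Lemma 8(i).

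Part (ii), however, has a genuine gap, and you concede it yourself: the entire content of the factor $1/\bigl(((k-1)!)^2(2k-1)\bigr)$ (indeed even the exponent $2k-1$) rests on an asymptotic you never establish, namely
\begin{equation*}
\sum_{n_1,n_2\le x}\frac{(\log n_1)^{\alpha}(\log n_2)^{\beta}}{n_1 n_2}\,\log\Bigl(\frac{x}{n_1}\wedge\frac{x}{n_2}\Bigr)
=\frac{(\log x)^{\alpha+\beta+3}}{(\alpha+1)(\beta+1)(\alpha+\beta+3)}+o\bigl((\log x)^{\alpha+\beta+3}\bigr)
\end{equation*}
with $\alpha=\beta=k-2$ (this is the paper's Lemma 6; it is exactly where $2k-1=\alpha+\beta+3$ comes from), together with $\sum_{n_1\le x,n_2\le y}\gcd(n_1,n_2)\sim xy\log(x\wedge y)/\zeta(2)$ (Lemma 2) for the $1/\zeta(2)$. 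Your sketch replaces this by a heuristic Beta-integral remark and the phrase ``once that is in hand a partial-summation/dominated-convergence argument finishes it,'' but dominated convergence cannot finish it: after inserting Theorem 1 for $F=h*\gcd$ the outer weights are $\tau_{k-1}^{(1)}(d_1)\tau_{k-1}^{(1)}(d_2)/(d_1d_2)$, whose sum over $d_1,d_2\le x$ diverges like $(\log x)^{2k-2}$, so there is no absolutely convergent majorant and no fixed-$(d_1,d_2)$ limit argument as in part (i); moreover the $o(\cdot)$ error from Theorem 1 is not uniform when $x/d_i$ is small, so the error terms must be split and estimated separately. The paper does precisely this bookkeeping in Lemma 8(ii) (the $J_1,J_2,J_3$ decomposition, using Lemma 4, Lemma 5 and the two-variable partial-summation Lemma 7), and then runs an induction in which the inductive input is Theorem 2(i) applied to $h=(f*\tilde\mu_k)*\tau_{k-1}$ and the last convolution peeled off is the single factor $\gcd=\tilde\tau_1$ — structurally close to what you propose, but with the missing lemma actually proved. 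Until you prove the displayed log-sum asymptotic and carry out the error analysis with the divergent weights, part (ii) is a plan rather than a proof.
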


\begin{remark} \rm{In part} $\mathrm{(ii)}$, we do not deal with: \\
$ \lim_{x,y \to \infty} (xy (\log x  \log y)^{k-1} \log x \wedge y )^{-1} \sum_{n_1 \le x ,  n_2 \le y} f (n_1,n_2)$  since it is too complicated and we cannot obtain a simple formula.
\end{remark}

The proof of Theorem 2 will also be given in the next section. 

\section{Proof of Theorem 1 and Theorem 2}
The following lemma is well known (cf. Cohen \cite{co2}) and will be needed later.

\begin{lemma} For fixed $\alpha \ge 0$ and all $x$, we have
\begin{equation}
 \sum_{n \le x}  \frac{\log^{\alpha} n}{n}= \frac{\log^{\alpha+1} x}{\alpha+1} +O(1).
\end{equation}
\end{lemma}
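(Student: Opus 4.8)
The plan is to compare the sum with the integral $\int_1^x \frac{\log^\alpha t}{t}\,dt$, which can be evaluated exactly: the substitution $u=\log t$ gives
\[
\int_1^x \frac{\log^\alpha t}{t}\,dt=\int_0^{\log x} u^\alpha\,du=\frac{\log^{\alpha+1}x}{\alpha+1}.
\]
So the whole content of the lemma is that the difference between $\sum_{n\le x}\log^\alpha n/n$ and this integral stays bounded as $x\to\infty$, with a bound depending only on the fixed parameter $\alpha$.

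Write $g(t)=\log^\alpha t/t$ for $t\ge 1$. Differentiating, $g'(t)=\log^{\alpha-1}t\,(\alpha-\log t)/t^2$, so $g$ increases on $[1,e^\alpha]$ and decreases on $[e^\alpha,\infty)$, with a single maximum $g(e^\alpha)$ that depends only on $\alpha$; in particular $g$ has total variation at most $2g(e^\alpha)=O(1)$ on $[1,\infty)$. The case $\alpha=0$ is the familiar $\sum_{n\le x}1/n=\log x+O(1)$, so assume $\alpha>0$, in which case also $g(1)=0$. Euler's summation formula gives
\[
\sum_{1\le n\le x}g(n)=\int_1^x g(t)\,dt+\int_1^x\{t\}\,g'(t)\,dt+g(1)-\{x\}\,g(x).
\]
Here $|\{x\}g(x)|\le g(x)\le g(e^\alpha)=O(1)$ once $x\ge e^\alpha$ (and $O(1)$ trivially before that), while $\bigl|\int_1^x\{t\}g'(t)\,dt\bigr|\le\int_1^\infty|g'(t)|\,dt$ is the total variation of $g$, hence $O(1)$. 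Combining this with the evaluation of the integral above yields the lemma.

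If one prefers to avoid Euler's formula, the same estimate follows from the elementary comparison $\int_n^{n+1}g(t)\,dt\le g(n)\le\int_{n-1}^n g(t)\,dt$, valid for every integer $n$ past the maximum: summing over $n_0\le n\le\lfloor x\rfloor$ for a fixed integer $n_0>e^\alpha$ telescopes the integrals and pins $\sum_{n_0\le n\le x}g(n)$ to $\int_{n_0}^x g(t)\,dt$ up to an error of at most $2g(n_0)=O(1)$, and the finitely many terms with $n<n_0$ together with $\int_1^{n_0}g(t)\,dt$ contribute only a constant depending on $\alpha$. There is no genuine obstacle in this lemma; the only points to keep in mind are that the implied constant is permitted to depend on the fixed $\alpha$, and that the non-monotonicity of $g$ on the interval $(1,e^\alpha)$ is harmless precisely because that interval has fixed length.
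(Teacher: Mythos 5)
Your argument is correct, and it supplies something the paper itself omits: the paper states this lemma without proof, simply citing Cohen's paper as the source of a ``well known'' estimate, so there is no in-paper argument to compare against. Your route --- evaluating $\int_1^x \log^\alpha t\,dt/t$ exactly by the substitution $u=\log t$ and then controlling $\sum_{n\le x} g(n)-\int_1^x g(t)\,dt$ via Euler summation, using that $g(t)=\log^\alpha t/t$ is unimodal with bounded total variation $2g(e^\alpha)$ depending only on $\alpha$ --- is the standard proof and is sound, including the separate treatment of $\alpha=0$. One minor technical point: for $0<\alpha<1$ the derivative $g'(t)=\log^{\alpha-1}t\,(\alpha-\log t)/t^2$ blows up as $t\to 1^+$, so the Euler summation formula as stated should be applied on $[2,x]$ (the $n=1$ term vanishes anyway) or justified by a limiting argument; the integral $\int_1^x|g'(t)|\,dt$ is still finite, equal to the total variation, so nothing breaks, and your second, purely comparison-based argument (bracketing $g(n)$ between $\int_n^{n+1}g$ and $\int_{n-1}^n g$ beyond the maximum) avoids the issue entirely and is the cleanest self-contained justification. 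Either version establishes the lemma with an implied constant depending only on $\alpha$, which is exactly what the paper needs.
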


It is also well known that $\sum_{n_1,n_2 \le x }  \gcd (n_1,n_2) = x^2 \log x / \zeta(2) + cx^2 +o(x^2),$
where $c$ is a suitable constant (cf. Ces$\grave{\rm{a}}$ro \cite{ce}). We would like to modify this formula as follows.

\begin{lemma}
\begin{equation}
\lim_{x,  y \to \infty} \frac{1}{xy \log x \wedge y} \sum_{n_1 \le x,  n_2 \le y} \gcd ( n_1 ,  n_2 ) =\frac{1}{\zeta (2)}.  \label{eq:lem2}
\end{equation}
\end{lemma}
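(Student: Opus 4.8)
The plan is to reduce the double sum $\sum_{n_1\le x,\,n_2\le y}\gcd(n_1,n_2)$ to a single sum over a common divisor, by writing $\gcd$ as a Dirichlet convolution. Recall the classical identity $\gcd(n_1,n_2)=\sum_{d\mid n_1,\,d\mid n_2}\varphi(d)$. Substituting this and switching the order of summation gives
\[
\sum_{n_1\le x,\,n_2\le y}\gcd(n_1,n_2)=\sum_{d\le x\wedge y}\varphi(d)\Bigl\lfloor\frac{x}{d}\Bigr\rfloor\Bigl\lfloor\frac{y}{d}\Bigr\rfloor,
\]
since for $d>x\wedge y$ there is no multiple of $d$ that is $\le x$ and $\le y$ simultaneously. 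Replacing each floor by $\frac{x}{d}+O(1)$ and $\frac{y}{d}+O(1)$, the main term becomes $xy\sum_{d\le x\wedge y}\varphi(d)/d^2$, and the error terms are $O\bigl(x\sum_{d\le x\wedge y}\varphi(d)/d\bigr)+O\bigl(y\sum_{d\le x\wedge y}\varphi(d)/d\bigr)+O\bigl(\sum_{d\le x\wedge y}\varphi(d)\bigr)$, each of which is $O(xy)$ (in fact $O((x+y)(x\wedge y))$ for the first two and $O((x\wedge y)^2)$ for the third), hence $o(xy\log(x\wedge y))$.

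The remaining task is to evaluate the asymptotics of $\sum_{d\le z}\varphi(d)/d^2$ as $z\to\infty$, where $z=x\wedge y$. Writing $\varphi(d)/d=\sum_{e\mid d}\mu(e)/e$, we get
\[
\sum_{d\le z}\frac{\varphi(d)}{d^2}=\sum_{d\le z}\frac{1}{d}\sum_{e\mid d}\frac{\mu(e)}{e}=\sum_{e\le z}\frac{\mu(e)}{e^2}\sum_{m\le z/e}\frac{1}{m}.
\]
Using $\sum_{m\le w}1/m=\log w+O(1)$ (the case $\alpha=0$ of Lemma 1), the inner sum contributes $\log(z/e)+O(1)=\log z-\log e+O(1)$, so the whole expression equals $(\log z)\sum_{e\le z}\mu(e)/e^2 + O\bigl(\sum_{e\le z}(1+\log e)/e^2\bigr)$. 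Since $\sum_{e=1}^{\infty}\mu(e)/e^2=1/\zeta(2)$ with tail $O(1/z)$, and $\sum_{e\le z}(1+\log e)/e^2=O(1)$, we obtain $\sum_{d\le z}\varphi(d)/d^2=\frac{\log z}{\zeta(2)}+O(1)$. Therefore $\sum_{n_1\le x,\,n_2\le y}\gcd(n_1,n_2)=\frac{xy\log(x\wedge y)}{\zeta(2)}+O(xy)$, and dividing by $xy\log(x\wedge y)$ and letting $x,y\to\infty$ yields the claimed limit $1/\zeta(2)$.

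The only mildly delicate point is bookkeeping the error terms uniformly as both $x$ and $y$ tend to infinity independently: one must check that the $O(xy)$ bounds genuinely hold regardless of how the ratio $x/y$ behaves, which is why it is convenient to phrase everything in terms of $z=x\wedge y$ and note $z\le x$, $z\le y$. No single step is a real obstacle; the main care needed is in confirming that the secondary terms are $o\bigl(xy\log(x\wedge y)\bigr)$ rather than merely $O(xy)$ in a way that survives the double limit, but since $\log(x\wedge y)\to\infty$ this is immediate once the $O(xy)$ bounds are established.
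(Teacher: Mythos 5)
Your proof is correct, but it follows a genuinely different route from the paper. The paper groups the pairs $(n_1,n_2)$ by the value $d=\gcd(n_1,n_2)$, writes the sum as $\sum_{d\le x\wedge y} d\,A(x/d,y/d)$ where $A(x,y)$ counts coprime pairs, and imports the limit $A(x,y)/(xy)\to 1/\zeta(2)$ from Theorem 7 of the author's earlier paper; you instead expand $\gcd(n_1,n_2)=\sum_{d\mid n_1,\,d\mid n_2}\varphi(d)$, swap the order of summation to get $\sum_{d\le x\wedge y}\varphi(d)\lfloor x/d\rfloor\lfloor y/d\rfloor$, and evaluate $\sum_{d\le z}\varphi(d)/d^2=\frac{\log z}{\zeta(2)}+O(1)$ by Möbius inversion. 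Your error bookkeeping is sound: the floor-replacement errors are $O((x+y)(x\wedge y)+(x\wedge y)^2)=O(xy)$, which is $o(xy\log(x\wedge y))$ uniformly in the ratio $x/y$. What your approach buys is self-containedness (no appeal to an external mean-value theorem) and a sharper, fully explicit remainder $O(xy)$, essentially recovering Ces\`aro's classical estimate quoted before the lemma; it also avoids the uniformity-in-$d$ issue that is implicit when the paper substitutes the asymptotic for $A(x/d,y/d)$ inside the sum over $d$. What the paper's route buys is brevity given the cited result, and consistency with the two-variable mean-value machinery used throughout the rest of the paper.
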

\begin{proof} Let 
\begin{align*}
A(x,y)&= \# \{ (n_1,  n_2) : \ 1 \le n_1 \le x, \  1 \le n_2 \le y, \  \gcd (n_1,n_2)=1 \}   \\
 &= \sum_{n_1 \le x,  n_2 \le y} \mu^2 ((\mathrm{gcd}(n_1, n_2))^2) . 
\end{align*}
Applying Theorem 7 in Ushiroya \cite{u2} to the function $\mu^2 ((\mathrm{gcd}(n_1, n_2))^2)$ we have 
\[ \lim_{x,  y \to \infty} \frac{1}{xy} A(x,y)=\frac{1}{\zeta (2)}. \]
From this we have
\[ \sum_{n_1 \le x, n_2 \le y} \gcd (n_1, n_2) \]
\vspace{-0.4cm}
\begin{align*}
&= \sum_{1 \le d \le x \wedge y} d \ \# \{ (n_1,  n_2) ; \  1 \le n_1 \le x, \ 1 \le n_2 \le y, \  \gcd(n_1,n_2)=d \}  \\
&= \sum_{1 \le d \le x \wedge y} d \ \# \{ (n_1^{'}, \ n_2^{'}) ; \ \ 1 \le n_1^{'} \le \frac{x}{d}, \ \ 1 \le n_2^{'} \le \frac{y}{d}, \ \ \gcd(n_1^{'},n_2^{'})=1 \}   \\
&= \sum_{1 \le d \le x \wedge y} d A(\frac{x}{d}, \ \frac{y}{d}) =\sum_{1 \le d \le x \wedge y} d \Bigl( \frac{1}{\zeta(2)} \frac{x}{d} \frac{y}{d} +o(\frac{x}{d} \frac{y}{d}) \Bigr)   \\
&= \frac{1}{\zeta(2)} xy \log x \wedge y   +o(xy \log x \wedge y) , 
\end{align*}
which implies (\ref{eq:lem2}).
\end{proof}

\begin{lemma} Let $a(n_1,n_2)$ be an arithmetic function of two variables satisfying $ \sum_{n_1,n_2=1}^\infty |a(n_1,n_2)| < \infty $. Then we have
\vspace{-0.3cm}
\begin{equation}
\lim_{x,  y \to \infty} \frac{1}{\log x \wedge y} \sum_{n_1 \le x ,   n_2 \le y} a(n_1,n_2) \log \frac{x}{n_1} \wedge \frac{y}{n_2} =\sum_{n_1,n_2=1}^\infty a(n_1,n_2). 
\label{eq:lem3}
\end{equation}
\end{lemma}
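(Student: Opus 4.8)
The plan is to reduce the claim to the one-variable statement by a standard ``tail-splitting'' argument, exploiting the absolute summability hypothesis. First I would fix $\varepsilon>0$ and choose $N$ so large that $\sum_{\max(n_1,n_2)>N}|a(n_1,n_2)|<\varepsilon$. Split the sum on the left of \eqref{eq:lem3} into the finite part $n_1,n_2\le N$ and the remainder. For the remainder, use the crude bound $0\le\log\tfrac{x}{n_1}\wedge\tfrac{y}{n_2}\le\log(x\wedge y)$ (valid once $x,y\ge N$, so that every $n_i\le\min(x,y)$ makes the logarithm nonnegative, and terms with $n_1>x$ or $n_2>y$ contribute nothing since there the argument of the log is not what we want---here one should instead note $\log\frac{x}{n_1}\wedge\frac{y}{n_2}$ is only summed over $n_i$ up to $x,y$ anyway, or simply bound $|\log\frac{x}{n_1}\wedge\frac{y}{n_2}|\le\log(x\wedge y)+O(1)$ uniformly). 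This shows the remainder, divided by $\log(x\wedge y)$, is $\le\varepsilon+o(1)$.

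For the main (finite) part, with $n_1,n_2\le N$ fixed and $x,y\to\infty$, I would show termwise that
\[
\frac{1}{\log x\wedge y}\,\log\frac{x}{n_1}\wedge\frac{y}{n_2}\longrightarrow 1 .
\]
Indeed $\log\frac{x}{n_1}\wedge\frac{y}{n_2}=\bigl(\log(x\wedge y)\bigr)+O_N(1)$ because $\log\frac{x}{n_1}=\log x-\log n_1$ and $\log\frac{y}{n_2}=\log y-\log n_2$ differ from $\log x$ and $\log y$ by bounded amounts, and the min of two quantities each within $\log N$ of $\log x,\log y$ respectively lies within $\log N$ of $\log(x\wedge y)$. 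Dividing by $\log(x\wedge y)\to\infty$ gives the limit $1$. Since the finite part has only $N^2$ terms, we may pass the limit inside the (finite) sum to get $\sum_{n_1,n_2\le N}a(n_1,n_2)$, which is within $\varepsilon$ of $\sum_{n_1,n_2=1}^\infty a(n_1,n_2)$.

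Combining the two parts: for all sufficiently large $x,y$,
\[
\Bigl|\frac{1}{\log x\wedge y}\sum_{n_1\le x,\,n_2\le y}a(n_1,n_2)\log\tfrac{x}{n_1}\wedge\tfrac{y}{n_2}-\sum_{n_1,n_2=1}^\infty a(n_1,n_2)\Bigr|\le C\varepsilon
\]
for an absolute constant $C$, which proves \eqref{eq:lem3}. The only mildly delicate point---the main obstacle---is handling the interaction of the minimum with the truncation: one must check that $\log\frac{x}{n_1}\wedge\frac{y}{n_2}$ stays comparable to $\log(x\wedge y)$ uniformly over the finite range and does not become negative or wildly small, and that the tail terms indexed by $(n_1,n_2)$ with one coordinate large but bounded by $x$ or $y$ are genuinely controlled by absolute summability rather than by any cancellation. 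Everything else is bookkeeping with Lemma 1-type estimates.
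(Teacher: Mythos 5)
Your proof is correct and follows essentially the same route as the paper's: truncate at a level $N$ chosen via absolute summability, replace $\log \frac{x}{n_1}\wedge\frac{y}{n_2}$ by $\log x\wedge y$ up to a bounded ($O_N(1)$) error on the finite block, and bound the tail contribution by $\varepsilon\,\log x\wedge y$ using $0\le\log\frac{x}{n_1}\wedge\frac{y}{n_2}\le\log x\wedge y$. If anything, your tail region $\max(n_1,n_2)>N$ is stated slightly more carefully than the paper's condition $n_1\wedge n_2\ge N$, which as written omits the mixed range where only one index exceeds $N$; your decomposition covers it.
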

\begin{proof} We put $M=\sum_{n_1,n_2=1}^\infty a(n_1,n_2)$. Then for any $\varepsilon>0$, there exists $N>0$ such that $\Bigl|\sum_{n_1, n_2<N}  a(n_1,n_2)-M \Bigr|< \varepsilon $. If we take $x$ and $y$ sufficiently large such that $x \wedge y > N$, then we have 
\[\sum_{n_1 \le x ,  n_2 \le y} a(n_1,n_2) \log \frac{x}{n_1} \wedge \frac{y}{n_2} = \sum_{n_1,n_2 <N } a(n_1,n_2) \Bigl( \log \frac{x}{n_1} \wedge \frac{y}{n_2}-\log x \wedge y \Bigr)  \]
\[ +\log x \wedge y \sum_{n_1 ,  n_2 <N} a(n_1,n_2)  + \sum_{\substack{n_1 \le x, n_2 \le y \\ \ n_1 \wedge n_2 \ge N}} a(n_1,n_2) \log \frac{x}{n_1} \wedge \frac{y}{n_2} =:I_1+I_2+I_3, \]
where
\begin{align*}
& I_1  \ll \Bigl(\sup_{n_1,n_2 <N} \log \Bigl| \frac{\frac{x}{n_1} \wedge \frac{y}{n_2}}{ x \wedge y}  \Bigr| \Bigr) \sum_{n_1,n_2 <N } a(n_1,n_2) \ll \log N, \hspace{3cm} \\
& |I_2 -M \log x \wedge y | < \varepsilon  \log x \wedge y,  \\
{\rm{and}} \qquad & \\
& I_3 \ll \log x \wedge y \sum_{\substack{n_1 \le x, n_2 \le y \\ \ n_1 \wedge n_2 \ge N}}  a(n_1,n_2) \ll  \varepsilon \log x \wedge y . 
\end{align*}
Therefore we have
\[\limsup_{ x,y \to \infty} \Bigl| \frac{1}{\log x \wedge y}  \sum_{n_1 \le x , \ n_2 \le y} a(n_1,n_2) \log \frac{x}{n_1} \wedge \frac{y}{n_2} -M   \Bigr| \ll 2\varepsilon. \]
Since $\varepsilon$ is arbitrary, (\ref{eq:lem3}) holds.
\end{proof}

Now we can prove Theorem 1. 

\begin{proof}[Proof of Theorem 1]
We put $g=f* \tilde{\mu}$. Noting that $\tilde{\mu} *\tilde{\tau}_1=\delta$ we have
\[ \sum_{n_1 \le x  ,  n_2 \le y} f (n_1,n_2)=\sum_{n_1 \le x  ,  n_2 \le y} (f*\tilde{\mu} *\tilde{\tau}_1) (n_1,n_2)=\sum_{n_1 \le x, n_2 \le y} (g *\tilde{\tau}_1) (n_1,n_2)                 \] 
\[ =\sum_{d_1 \delta_1 \le x  , \ d_2 \delta_2 \le y} g(d_1 , d_2) \tilde{\tau} (\delta_1  ,  \delta_2)=\sum_{n_1  \le x ,  n_2 \le y} g(n_1 \ , n_2)  \sum_{ \delta_1 \le \frac{x}{n_1}  ,   \delta_2 \le \frac{y}{n_2}}  \tilde{\tau}_1 (\delta_1  ,  \delta_2).                 \] 

From Lemma 2 we see that this equals 
\[ \sum_{n_1 \le x , n_2 \le y} g(n_1  , n_2) \Bigl\{\frac{1}{\zeta (2)} \frac{x}{n_1} \frac{y}{n_2} \log (\frac{x}{n_1} \wedge \frac{y}{n_2} ) + o \bigl(\frac{x}{n_1} \frac{y}{n_2} \log (\frac{x}{n_1} \wedge \frac{y}{n_2} ) \bigr) \Bigr\} .                 \] 

Applying Lemma 3 to the function $a(n_1,n_2)=g(n_1,n_2) / n_1 n_2$, we see that the above equals 
\[  \frac{xy \log x \wedge y}{\zeta(2)} \sum_{n_1,n_2=1}^\infty \frac{g(n_1,n_2)}{n_1 n_2} + o(xy \log x \wedge y), \]
which implies (\ref{eq:th1}). Thus the proof of Theorem 1 is now complete.
\end{proof}

Next we prove several lemmas needed later.

\begin{lemma} 
$\displaystyle \sum_{n_1 \le x , n_2 \le y}  \log n_1 \wedge n_2 = xy \log x \wedge y+o(xy \log x \wedge y). $
\end{lemma}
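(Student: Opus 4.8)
The plan is to reduce first to the case $x \le y$, which is legitimate because both the double sum and $x \wedge y$ are invariant under the simultaneous exchange $(x,n_1)\leftrightarrow(y,n_2)$ (the summand $\log\min(n_1,n_2)$ being symmetric). Once $x \le y$ we have $x \wedge y = x$ and $\log n_1 \wedge n_2 = \log\min(n_1,n_2)$, so the claim becomes
\[
\sum_{n_1 \le x,\, n_2 \le y} \log\min(n_1,n_2) = xy\log x + o(xy\log x).
\]

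First I would fix $n_1 \le x$ (so that $n_1 \le y$ as well) and evaluate the inner sum over $n_2$ by splitting at $n_2 = n_1$:
\[
\sum_{n_2 \le y} \log\min(n_1,n_2) = \sum_{n_2 \le n_1}\log n_2 + (\lfloor y\rfloor - n_1)\log n_1 .
\]
Using the elementary estimate $\sum_{n \le t}\log n = t\log t - t + O(\log(t+2))$ (Stirling's formula, or partial summation against $\int_1^t \log u\,du$), the inner sum equals $\lfloor y\rfloor\log n_1 - n_1 + O(\log(n_1+2))$.

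Next I would sum this over $n_1 \le x$, applying the same estimate to $\sum_{n_1\le x}\log n_1$ together with the trivial $\sum_{n_1\le x} n_1 = \tfrac{x^2}{2}+O(x)$ and $\lfloor y\rfloor = y + O(1)$, to arrive at
\[
\sum_{n_1 \le x,\, n_2 \le y} \log\min(n_1,n_2) = xy\log x - xy - \tfrac{x^2}{2} + O(x\log x) + O(y\log x).
\]
It then remains to note that, after dividing by $xy\log x$, the four secondary terms contribute $O(1/\log x)$, $O\!\left(x/(y\log x)\right)$, $O(1/y)$ and $O(1/x)$; since $x \le y$ the second is at most $O(1/\log x)$, so all of them tend to $0$ as $\min(x,y)\to\infty$, which also supplies the uniformity implicit in $\lim_{x,y\to\infty}$.

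The one point that needs care is precisely the reduction to $x \le y$: without it the term $x^2/2$ need not be $o\!\left(xy\log(x\wedge y)\right)$ --- it would fail, for instance, along sequences where $y$ stays bounded while $x\to\infty$ --- so the symmetry step is doing genuine work rather than merely shortening the exposition. Beyond that, the argument is a routine bookkeeping exercise with Stirling's estimate and introduces nothing not already present in the proof of Lemma 2.
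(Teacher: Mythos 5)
Your proof is correct and is essentially the paper's own argument: after the same WLOG ordering (you take $x\le y$, the paper takes $y\le x$, which is just a relabeling), both proofs iterate the sum with the outer index running over the smaller range, split the inner sum at the diagonal, and apply $\sum_{n\le t}\log n=t\log t-t+O(\log t)$. Your version merely tracks the secondary terms ($-xy$, $-x^{2}/2$, etc.) a bit more explicitly than the paper's $O(y^{2})$ bookkeeping.
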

\begin{proof} Without loss of generality, we may assume that $y \le x.$ Let $[x]$ denote the greatest integer that is less than or equal to $x$.
Using the well known formula $\sum_{1 \le n \le x} \log n=x \log x -x + O(\log x), $ we have
\vspace{-0.2cm}
\begin{align*}
&\displaystyle \sum_{n_1 \le x  ,  n_2 \le y} \log n_1 \wedge n_2 =\sum_{ n_2 \le y} \Bigl(\sum_{ n_1=1}^{ n_2}  \log n_1+\sum_{ n_1= n_2+1}^{[x]}  \log n_2 \Bigr)  \\
&= \sum_{ n_2 \le y} \Bigl(n_2 \log n_2-n_2+O(\log n_2)+([x]-n_2)  \log n_2 \Bigr)   \\
&= \sum_{ n_2 \le y} \Bigl([x] \log n_2-n_2+O(\log n_2) \Bigr)=[x](y \log y-y+O(\log y)) +O(y^2)   \\
&= xy \log x \wedge y+o(xy \log x \wedge y).  
\end{align*}
\end{proof}

\begin{lemma}
$\displaystyle \sum_{n_1,n_2 \le x}  \frac{\log n_1 \wedge n_2}{n_1 n_2} = \frac{1}{3} (\log x)^3+o((\log x)^3) . $
\end{lemma}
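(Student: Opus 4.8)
The plan is to reduce the two–variable sum to a one–variable calculation by fixing the smaller of the two indices. Without loss of generality assume $n_2 \le n_1$, so $\log n_1 \wedge n_2 = \log n_2$, and split the double sum as
\[
\sum_{n_1,n_2 \le x} \frac{\log n_1 \wedge n_2}{n_1 n_2} = 2 \sum_{n_2 \le x} \frac{\log n_2}{n_2} \sum_{n_2 \le n_1 \le x} \frac{1}{n_1} - \sum_{n \le x} \frac{\log n}{n^2},
\]
where the last term (the diagonal $n_1=n_2$, counted twice and then removed once) converges and is therefore $o((\log x)^3)$. So the whole problem comes down to the first term.

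For the inner sum I would use the standard estimate $\sum_{n_2 \le n_1 \le x} 1/n_1 = \log x - \log n_2 + O(1)$, valid uniformly once $n_2 \le x$. Substituting this in, the main term becomes
\[
2 \sum_{n_2 \le x} \frac{\log n_2}{n_2}\bigl(\log x - \log n_2\bigr) + O\Bigl( \sum_{n_2 \le x} \frac{\log n_2}{n_2}\Bigr),
\]
and the error term here is $O((\log x)^2) = o((\log x)^3)$ by Lemma 1 with $\alpha = 1$. Then I would evaluate the two remaining sums using Lemma 1: with $\alpha = 1$ we get $\sum_{n \le x} \log n / n = \tfrac12 (\log x)^2 + O(1)$, and with $\alpha = 2$ we get $\sum_{n \le x} (\log n)^2 / n = \tfrac13 (\log x)^3 + O(1)$. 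Hence the main term is
\[
2\Bigl( \log x \cdot \tfrac12 (\log x)^2 - \tfrac13 (\log x)^3 \Bigr) + o((\log x)^3) = 2\bigl(\tfrac12 - \tfrac13\bigr)(\log x)^3 + o((\log x)^3) = \tfrac13 (\log x)^3 + o((\log x)^3),
\]
which is exactly the claimed asymptotic.

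I do not expect a serious obstacle here; the one mildly delicate point is making sure the $O(1)$ term in the inner-sum estimate $\sum_{n_2 \le n_1 \le x} 1/n_1 = \log x - \log n_2 + O(1)$ is genuinely uniform in $n_2$ over the range $1 \le n_2 \le x$, so that multiplying by $\log n_2 / n_2$ and summing produces only an $O((\log x)^2)$ error rather than something larger; this follows from the usual bound $\sum_{m \le T} 1/m = \log T + \gamma + O(1/T)$ applied at $T = x$ and $T = n_2$. Everything else is a direct application of Lemma 1.
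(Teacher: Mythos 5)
Your proposal is correct and is in substance the same argument as the paper's: both condition on which of $n_1,n_2$ is the minimum and then evaluate the resulting sums with Lemma 1 (for $\alpha=1,2$) together with $\sum_{n_2\le n_1\le x}1/n_1=\log x-\log n_2+O(1)$. The only difference is that you symmetrize first (twice the region $n_2\le n_1$ minus the convergent diagonal, giving $2(\tfrac12-\tfrac13)=\tfrac13$), while the paper keeps both triangular pieces for each fixed $n_2$ and obtains $\tfrac16+\tfrac12-\tfrac13=\tfrac13$; this is a cosmetic simplification, not a different method.
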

\begin{proof}  Using Lemma 1 we have
\[ \sum_{n_1,n_2 \le x}  \frac{\log n_1 \wedge n_2}{n_1 n_2} = \sum_{n_2 \le x} \Bigl( \sum_{n_1 \le n_2} \frac{\log n_1}{n_1 n_2} +\sum_{n_2 < n_1 \le x} \frac{\log n_2}{n_1 n_2} \Bigr)  \]
\begin{align*}
&=\sum_{n_2 \le x} \Bigl( \frac{(\log n_2)^2 + O(1)  }{2 n_2}+ \frac{\log n_2(\log x -\log n_2+O(1))  }{n_2} \Bigr)    \\
&= \frac{1}{6} (\log x)^3+\frac{1}{2} (\log x)^3-\frac{1}{3} (\log x)^3+o((\log x)^3)= \frac{1}{3} (\log x)^3+o((\log x)^3).    
\end{align*}
\end{proof}

\begin{lemma} For fixed $\alpha, \beta \ge 0$ and all $x$, we have
\begin{align}
\sum_{n_1 ,n_2 \le x} \frac{ (\log n_1)^\alpha  (\log n_2)^\beta  \ \log \frac{x}{n_1} \wedge \frac{x}{n_2}}{n_1 n_2} = & \frac{ (\log x)^{\alpha+\beta+3} }{(\alpha+1)(\beta+1) (\alpha+\beta+3)}  \nonumber \\
& +o \bigl((\log x)^{\alpha+\beta+3} \bigr) . \label{eq:lem6}
\end{align}
\end{lemma}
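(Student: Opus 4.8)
The plan is to reduce the two-variable double sum to a one-variable computation by splitting the square $\{n_1,n_2\le x\}$ along the diagonal $n_1=n_2$, exactly as was done in Lemmas 4 and 5. By symmetry in $n_1$ and $n_2$ the two triangular regions contribute equally up to the exponents $\alpha,\beta$ being swapped, so it suffices to analyze the region $n_1\le n_2$, where $n_1\wedge n_2=n_1$ and hence $\log\frac{x}{n_1}\wedge\frac{x}{n_2}=\log\frac{x}{n_2}=\log x-\log n_2$. Thus I would write
\[
\sum_{n_1,n_2\le x}\frac{(\log n_1)^\alpha(\log n_2)^\beta\,\log\frac{x}{n_1}\wedge\frac{x}{n_2}}{n_1 n_2}
=\sum_{n_2\le x}\frac{(\log n_2)^\beta(\log x-\log n_2)}{n_2}\sum_{n_1\le n_2}\frac{(\log n_1)^\alpha}{n_1}
\ +\ (\alpha\leftrightarrow\beta\text{ term}),
\]
with a harmless diagonal overlap of lower order. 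Applying Lemma 1 to the inner sum gives $\sum_{n_1\le n_2}(\log n_1)^\alpha/n_1=(\log n_2)^{\alpha+1}/(\alpha+1)+O(1)$.

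Next I would substitute this back and expand: the main term becomes
\[
\frac{1}{\alpha+1}\sum_{n_2\le x}\frac{(\log n_2)^{\alpha+\beta+1}(\log x-\log n_2)}{n_2}
=\frac{1}{\alpha+1}\Bigl(\log x\sum_{n_2\le x}\frac{(\log n_2)^{\alpha+\beta+1}}{n_2}-\sum_{n_2\le x}\frac{(\log n_2)^{\alpha+\beta+2}}{n_2}\Bigr),
\]
and one more application of Lemma 1 turns each remaining sum into $(\log x)^{\alpha+\beta+2}/(\alpha+\beta+2)+O(1)$ and $(\log x)^{\alpha+\beta+3}/(\alpha+\beta+3)+O(1)$ respectively. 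Collecting terms, the region $n_1\le n_2$ contributes
\[
\frac{1}{\alpha+1}\Bigl(\frac{1}{\alpha+\beta+2}-\frac{1}{\alpha+\beta+3}\Bigr)(\log x)^{\alpha+\beta+3}+o\bigl((\log x)^{\alpha+\beta+3}\bigr)
=\frac{1}{(\alpha+1)(\alpha+\beta+2)(\alpha+\beta+3)}(\log x)^{\alpha+\beta+3}+o(\cdots),
\]
and the region $n_2\le n_1$ contributes the same with $\alpha$ and $\beta$ exchanged, i.e. $\frac{1}{(\beta+1)(\alpha+\beta+2)(\alpha+\beta+3)}(\log x)^{\alpha+\beta+3}$. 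Adding the two and using $\frac{1}{\alpha+1}+\frac{1}{\beta+1}=\frac{\alpha+\beta+2}{(\alpha+1)(\beta+1)}$ collapses the sum to $\frac{1}{(\alpha+1)(\beta+1)(\alpha+\beta+3)}(\log x)^{\alpha+\beta+3}$, which is exactly (\ref{eq:lem6}).

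The only real point requiring care is bookkeeping of the error terms: each use of Lemma 1 introduces an $O(1)$ inside a sum that is then multiplied by factors of size $(\log x)^{O(1)}$ and summed again, so I must check that every such contribution is genuinely $o\bigl((\log x)^{\alpha+\beta+3}\bigr)$ — e.g. the $O(1)$ from the inner sum, once carried through $\sum_{n_2\le x}(\log n_2)^\beta(\log x-\log n_2)/n_2$, produces at worst $O((\log x)^{\beta+2})$, which is of strictly lower order since $\alpha+1\ge 1$. The diagonal terms $n_1=n_2$ counted in both regions form a sum $\sum_{n\le x}(\log n)^{\alpha+\beta}(\log x-\log n)/n^2$, which converges and is therefore also absorbed into the error. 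No genuine obstacle arises; the lemma is a routine iteration of Lemma 1 after the diagonal split, and the slightly delicate part is simply verifying that the exponent $\alpha+\beta+3$ is the true order and the combinatorial identity for the constant works out.
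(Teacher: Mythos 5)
Your argument is correct and takes essentially the same route as the paper: both split the sum at the diagonal into the regions $n_1\le n_2$ and $n_2<n_1$ and then iterate Lemma 1, the only cosmetic difference being that you dispatch the second region via the $\alpha\leftrightarrow\beta$ symmetry while the paper evaluates it directly with Lemma 1 over $n_2<n_1\le x$. One trivial remark: the diagonal contribution is $O(\log x)$ rather than a convergent sum (because of the factor $\log x-\log n$), but as you say it is still $o\bigl((\log x)^{\alpha+\beta+3}\bigr)$.
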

\begin{proof} Using Lemma 1 we see that the left side of (\ref{eq:lem6}) equals
\[ \sum_{n_2 \le x} \Bigl( \sum_{n_1 \le n_2} \frac{(\log n_1)^\alpha  (\log n_2)^\beta  \log \frac{x}{n_2}}{n_1 n_2} +\sum_{n_2 < n_1 \le x} \frac{(\log n_1)^\alpha  (\log n_2)^\beta  \log \frac{x}{n_1}}{n_1 n_2} \ \Bigr)     \]
\begin{align*}
& = \sum_{n_2 \le x} \Bigl( \sum_{n_1 \le n_2} \frac{(\log n_1)^\alpha  (\log n_2)^\beta  (\log x -\log n_2)}{n_1 n_2} \\
& \qquad \qquad +\sum_{n_2 < n_1 \le x} \frac{(\log n_1)^\alpha  (\log n_2)^\beta  (\log x -\log n_1)}{n_1 n_2}  \Bigr)     \\
& =  \sum_{n_2 \le x} \Bigl(  \frac{((\log n_2)^{\alpha+1}+O(1))(\log n_2)^{\beta} (\log x-\log n_2)}{(\alpha+1) n_2}  \\
& \qquad \qquad + \frac{( (\log x)^{\alpha+1} -(\log n_2)^{\alpha+1}+O(1) ) (\log n_2)^\beta  \log x }{(\alpha+1) n_2}  \\
& \qquad \qquad - \frac{( (\log x)^{\alpha+2} -(\log n_2)^{\alpha+2}+O(1) ) (\log n_2)^\beta }{(\alpha+2) n_2} \   \Bigr)  \\ 
& = \sum_{n_2 \le x} \Bigl(\frac{1}{\alpha+1}-\frac{1}{\alpha+2} \Bigr) \frac{(\log x)^{\alpha+2} (\log n_2)^{\beta} -(\log n_2)^{\alpha+\beta+2}}{n_2}+o \bigl((\log x)^{\alpha+\beta+3} \bigr) \\
& = \frac{1}{(\alpha+1)(\alpha+2)} \Bigl(\frac{(\log x)^{\alpha+\beta+3}}{\beta+1}-\frac{(\log x)^{\alpha+\beta+3}}{\alpha+\beta+3} \Bigr)+o \bigl((\log x)^{\alpha+\beta+3} \bigr) \\
& = \frac{ (\log x)^{\alpha+\beta+3} }{(\alpha+1)(\beta+1) (\alpha+\beta+3)}  +o \bigl((\log x)^{\alpha+\beta+3} \bigr) .
\end{align*}
This proves Lemma 6.
\end{proof}

The next lemma gives a partial summation formula in the case of a function of two variables.
\begin{lemma} Let $a(n_1,n_2)$ be an arithmetic function of two variables and \\ let $M(x,y)=\sum_{n_1 \le x , n_2 \le y} a(n_1,n_2)$. Then we have
\[ \sum_{n_1 \le x ,  n_2 \le y} \frac{a(n_1,n_2)}{n_1 n_2}=\sum_{\substack{n_1 \le x \\  n_2 \le y}} \frac{M(n_1,n_2)}{n_1(n_1+1) n_2(n_2+1)} +\sum_{n_1 \le x} \frac{M(n_1,y)}{n_1(n_1+1) ([y]+1)}  \]
\begin{equation}
 +\sum_{n_2 \le y} \frac{M(x,n_2)}{n_2(n_2+1) ([x]+1)} + \frac{M(x,y)}{([x]+1)([y]+1)},  \label{eq:lem7}
\end{equation}
where $[x]$ is the greatest integer that is less than or equal to $x$.
\end{lemma}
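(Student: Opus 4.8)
The plan is to prove this two-variable partial summation formula by iterating the classical one-variable Abel summation identity, first in the variable $n_1$ (with $n_2$ frozen) and then in the variable $n_2$. Concretely, I would introduce the ``row sums'' $R(m,n_2) = \sum_{n_1 \le m} a(n_1,n_2)$, so that $R(n_1,n_2) - R(n_1-1,n_2) = a(n_1,n_2)$ and $M(x,y) = \sum_{n_2 \le y} R(x,n_2)$ with $R(x,n_2) = R([x],n_2)$. The telescoping identity
\[
\frac{1}{n_1} = \sum_{n_1 \le m < [x]} \Bigl( \frac{1}{m} - \frac{1}{m+1} \Bigr) + \frac{1}{[x]+1} \cdot \frac{[x]+1}{\,[x]\,}
\]
is the wrong normalization; instead I would use the cleaner discrete identity $\frac{1}{n_1} = \sum_{n_1 \le m \le [x]} \frac{1}{m(m+1)} + \frac{1}{[x]+1}$, which is exactly the telescoping sum $\sum_{m \ge n_1} \bigl(\frac{1}{m} - \frac{1}{m+1}\bigr)$ truncated at $[x]$. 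Substituting this into $\sum_{n_1 \le x} a(n_1,n_2)/n_1$ and interchanging the order of summation in $n_1$ and $m$ collapses $\sum_{n_1 \le m} a(n_1,n_2)$ to $R(m,n_2)$, giving
\[
\sum_{n_1 \le x} \frac{a(n_1,n_2)}{n_1} = \sum_{m \le x} \frac{R(m,n_2)}{m(m+1)} + \frac{R(x,n_2)}{[x]+1}.
\]

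Next I would divide by $n_2$ and sum over $n_2 \le y$, then apply the very same one-variable identity a second time, now in the variable $n_2$, to each of the two resulting sums. The first term contributes $\sum_{m \le x} \frac{1}{m(m+1)} \sum_{n_2 \le y} \frac{R(m,n_2)}{n_2}$, and applying the identity in $n_2$ turns $\sum_{n_2 \le n}R(m,n_2)$ into $M(m,n)$ (since summing the row sums over $n_2$ up to $n$ is precisely $M(m,n)$), producing the first two displayed terms of \eqref{eq:lem7}: the double sum $\sum_{m \le x}\sum_{n \le y} \frac{M(m,n)}{m(m+1)n(n+1)}$ and the boundary sum $\sum_{m \le x} \frac{M(m,y)}{m(m+1)([y]+1)}$. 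The second term, $\frac{1}{[x]+1}\sum_{n_2 \le y}\frac{R(x,n_2)}{n_2}$, similarly splits into $\sum_{n \le y}\frac{M(x,n)}{n(n+1)([x]+1)}$ and $\frac{M(x,y)}{([x]+1)([y]+1)}$, which are exactly the remaining two terms of \eqref{eq:lem7}. Collecting the four pieces gives the claimed formula.

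There is no real analytic content here — no limits, no error terms — so the argument is purely algebraic bookkeeping. The one genuine point to get right is the elementary summation identity $\frac{1}{n} = \sum_{n \le m \le [x]} \frac{1}{m(m+1)} + \frac{1}{[x]+1}$ for a positive integer $n \le x$; I would verify it up front by the partial-fraction telescoping $\frac{1}{m(m+1)} = \frac{1}{m} - \frac{1}{m+1}$, so that the sum over $n \le m \le [x]$ equals $\frac{1}{n} - \frac{1}{[x]+1}$. The main obstacle — really the only thing that needs care — is doing the interchange of summation order cleanly in both variables and keeping track of which boundary term ($n_1 = x$, $n_2 = y$, or both) each of the four pieces comes from, so that the $[x]+1$ and $[y]+1$ denominators land in the right places. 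Once the interchange is set up correctly, the two applications of the one-variable identity produce \eqref{eq:lem7} directly, and no further estimation is needed.
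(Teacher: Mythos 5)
Your argument is correct: the identity $\frac{1}{n}=\sum_{n\le m\le [x]}\frac{1}{m(m+1)}+\frac{1}{[x]+1}$ is valid, the interchange of summation collapses $\sum_{n_1\le m}a(n_1,n_2)$ to $R(m,n_2)$ exactly as you say, and applying the same one-variable identity in $n_2$ (using $\sum_{n_2\le n}R(m,n_2)=M(m,n)$) reproduces the four terms of \eqref{eq:lem7} with the boundary factors $[x]+1$, $[y]+1$ in the right places. The paper proves the lemma by the transposed form of the same summation-by-parts idea: it writes $a(n_1,n_2)=M(n_1,n_2)-M(n_1-1,n_2)-M(n_1,n_2-1)+M(n_1-1,n_2-1)$ (with $M$ set to $0$ off the positive quadrant), rearranges the double sum so that each $M(n_1,n_2)$ is multiplied by $\frac{1}{n_1 n_2}-\frac{1}{(n_1+1)n_2}-\frac{1}{n_1(n_2+1)}+\frac{1}{(n_1+1)(n_2+1)}=\frac{1}{n_1(n_1+1)n_2(n_2+1)}$, and reads off the boundary terms from the shift. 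So the paper decomposes $a$ as a mixed second difference of $M$ and does the two-dimensional Abel summation in one pass, whereas you decompose $\frac{1}{n_1 n_2}$ via telescoping and iterate the one-variable identity through the intermediate row sums $R(m,n_2)$. The two routes are dual to each other; yours has the advantage of reducing everything to a single well-known one-variable identity applied twice, so the boundary terms appear automatically, while the paper's direct rearrangement is shorter once one trusts the bookkeeping of the 2D shift. Either way the content is purely algebraic, as you note.
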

\begin{proof} We put $M(x.y)=0$ if $x<1$ or $y<1$ for convenience. Then we see that the left side of (\ref{eq:lem7}) equals
\[  \sum_{n_1 \le x ,  n_2 \le y} \frac{M(n_1,n_2)-M(n_1-1,n_2)-M(n_1,n_2-1)+M(n_1-1,n_2-1)}{n_1 n_2}  \]
\[=\sum_{n_1 \le x ,  n_2 \le y} M(n_1,n_2) \Bigl\{\frac{1}{n_1 n_2} -\frac{1}{(n_1+1) n_2} -\frac{1}{n_1 (n_2+1)} +\frac{1}{(n_1+1)( n_2+1)}  \Bigr\}  \]
\[+ \sum_{n_2 \le y} \frac{M(x,n_2)}{([x]+1) n_2}+ \sum_{n_1 \le x} \frac{M(n_1,y)}{n_1([y]+1)} -\sum_{n_1 \le x} \frac{M(n_1,y)}{(n_1+1)([y]+1)}  \]
\[- \sum_{n_2 \le y} \frac{M(x,n_2)}{([x]+1) (n_2+1)}+ \frac{M(x,y)}{([x+1]+1)([y]+1)}   \]
\[=\sum_{n_1 \le x ,  n_2 \le y} M(n_1,n_2) \frac{1}{n_1(n_1+1) n_2(n_2+1)} +\sum_{n_2 \le y} \frac{M(x,n_2)}{([x]+1) }  \Bigl(\frac{1}{n_2}-\frac{1}{n_2+1} \Bigr)  \]
\[ +\sum_{n_1 \le x} \frac{M(n_1,y)}{([y]+1) }  \Bigl(\frac{1}{n_1}-\frac{1}{n_1+1} \Bigr) + \frac{M(x,y)}{([x]+1)([y]+1)},   \]
which equals the right side of (\ref{eq:lem7}).
\end{proof}

The next lemma is an extension of Proposition 5 in van der Corput \cite{van} to the case of arithmetical functions of two variables.

\begin{lemma} 
Let $a, b$ be arithmetical functions of two variables and let $c=a*b$. \\
For $\alpha, \beta \ge 0$, we assume that
\[  \lim_{x,y \to \infty} \frac{1}{xy (\log x)^\alpha (\log y)^{\beta}} \sum_{n_1 \le x  , n_2 \le y} a(n_1, n_2)=A, \]
where $A$ is a constant. 
\[ \mathrm{(i)} \ If \lim_{x,y \to \infty} \frac{1}{xy } \sum_{n_1 \le x  ,  n_2 \le y} b(n_1, n_2)= B,  \ \ where \  B \ is \ a  \ constant, \ then  \hspace{5in} \]
\begin{equation}\label{eq:lem8i}
\lim_{x \to \infty} \frac{1}{xy (\log x)^{\alpha+1} (\log y)^{\beta+1}} \sum_{n_1 \le x , n_2 \le y} c(n_1, n_2)= \frac{AB}{(\alpha+1)(\beta+1) } . 
\end{equation}
\[\mathrm{(ii)} \ If \lim_{x,y \to \infty} \frac{1}{xy \log x \wedge y} \sum_{n_1 \le x  ,  n_2 \le y} b(n_1, n_2)= B, \ \  where \  B \ is \ a \ constant, \ then  \hspace{5in}  \]
\begin{equation}\label{eq:lem8ii}
\lim_{x \to \infty} \frac{1}{x^2 (\log x)^{\alpha+\beta+3}  } \sum_{n_1,  n_2 \le x} c(n_1, n_2)= \frac{AB}{(\alpha+1)(\beta+1) (\alpha+\beta+3)} . 
\end{equation}
\end{lemma}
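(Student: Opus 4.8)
Write $M_a(x,y)=\sum_{n_1\le x,\,n_2\le y}a(n_1,n_2)$ and $M_b(x,y)=\sum_{n_1\le x,\,n_2\le y}b(n_1,n_2)$, so that the hypotheses read $M_a(u,v)=Auv(\log u)^{\alpha}(\log v)^{\beta}(1+o(1))$ as $u,v\to\infty$, together with $M_b(u,v)=Buv(1+o(1))$ in part (i) and $M_b(u,v)=Buv\,\log(u\wedge v)(1+o(1))$ in part (ii). Since $c=a*b$, separating the convolution variables gives the identity
\[ \sum_{n_1\le x,\,n_2\le y}c(n_1,n_2)=\sum_{d_1\le x,\,d_2\le y}a(d_1,d_2)\,M_b\!\left(\frac{x}{d_1},\frac{y}{d_2}\right). \]
The plan is: (1) replace $M_b(x/d_1,y/d_2)$ by its asymptotic; (2) apply a two-variable partial summation in the style of Lemma 7 to transfer the remaining $a$-sum onto $M_a$ and then insert its asymptotic; (3) evaluate the resulting weighted double sum, by Lemma 1 in part (i) and by Lemma 6 in part (ii).

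In part (i), step (1) reduces the right-hand side to $Bxy\sum_{d_1\le x,\,d_2\le y}a(d_1,d_2)/(d_1d_2)$ plus an error term. By Lemma 7 the main factor equals $\sum_{n_1\le x,\,n_2\le y}M_a(n_1,n_2)\big/\bigl(n_1(n_1+1)n_2(n_2+1)\bigr)$ plus three boundary sums; inserting $M_a(n_1,n_2)\sim An_1n_2(\log n_1)^{\alpha}(\log n_2)^{\beta}$ the leading block becomes $A\sum_{n_1\le x,\,n_2\le y}(\log n_1)^{\alpha}(\log n_2)^{\beta}/(n_1n_2)$, which by two applications of Lemma 1 equals $A\,\dfrac{(\log x)^{\alpha+1}}{\alpha+1}\,\dfrac{(\log y)^{\beta+1}}{\beta+1}+o\!\bigl((\log x)^{\alpha+1}(\log y)^{\beta+1}\bigr)$; each boundary sum from Lemma 7 carries at least one power of $\log x$ or of $\log y$ fewer and is absorbed into the error. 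Multiplying by $Bxy$ gives (\ref{eq:lem8i}).

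In part (ii) one sets $y=x$; step (1) now produces the weight $\log\!\bigl(\tfrac{x}{d_1}\wedge\tfrac{x}{d_2}\bigr)$, leaving $Bx^{2}\sum_{d_1,d_2\le x}\tfrac{a(d_1,d_2)}{d_1d_2}\,\log\!\bigl(\tfrac{x}{d_1}\wedge\tfrac{x}{d_2}\bigr)$ to estimate. The same Abel argument as in Lemma 7, but carrying the extra factor $\log\!\bigl(\tfrac{x}{d_1}\wedge\tfrac{x}{d_2}\bigr)$ alongside $1/(d_1d_2)$, transfers this onto $M_a$, and after inserting the asymptotic for $M_a$ the leading block is exactly $A$ times the sum evaluated in Lemma 6, namely $A\,\dfrac{(\log x)^{\alpha+\beta+3}}{(\alpha+1)(\beta+1)(\alpha+\beta+3)}+o\!\bigl((\log x)^{\alpha+\beta+3}\bigr)$; the boundary and lower-order difference terms produced by the partial summation only contribute $O\!\bigl((\log x)^{\alpha+\beta+2}\bigr)$. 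Multiplying by $Bx^{2}$ yields (\ref{eq:lem8ii}).

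The step I expect to be the real obstacle is securing enough uniformity to carry out (1) and (2) honestly. The asymptotics for $M_a$ and $M_b$ are available only when \emph{both} arguments tend to infinity, whereas $M_b(x/d_1,y/d_2)$ has bounded arguments when $d_1$ is close to $x$ or $d_2$ is close to $y$, and the transfer onto $M_a$ in step (2) involves small $d_1,d_2$. I would handle these degenerate ranges in the standard fashion: fix a large threshold $N$, split each double sum according to whether both quotients exceed $N$, treat the main block with the asymptotics, and bound the remaining logarithmically thin blocks crudely. That crude bound calls for majorants of $M_a$ and $M_b$ valid on all of $[1,\infty)^2$ (of the shape $M_a(u,v)\ll uv(\log 2u)^{\alpha}(\log 2v)^{\beta}$, and likewise for $M_b$); this needs a little care, since it does not follow formally from the stated limits, but it is available from the explicit form of $a$ and $b$ in the intended applications. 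Once the degenerate ranges are disposed of, it remains to check that the $o(\cdot)$-errors coming from $M_a$ and $M_b$ stay $o$ of the main term after being weighted by $1/(d_1d_2)$ (or its Lemma 6 analogue), which is an $\varepsilon$--$N$ argument of exactly the type used in the proof of Lemma 3.
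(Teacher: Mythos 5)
Your proposal is correct in outline and follows essentially the same strategy as the paper: both start from $\sum_{n_1\le x,\,n_2\le y}c(n_1,n_2)=\sum_{d_1\le x,\,d_2\le y}a(d_1,d_2)\,M_b(x/d_1,y/d_2)$, insert the asymptotic for $M_b$, transfer the remaining weighted $a$-sum onto $M_a$ via the partial-summation Lemma 7, and finish with Lemma 1 in part (i) and Lemmas 4--6 in part (ii). The one place you genuinely diverge is the middle step of part (ii): you propose a two-variable Abel summation carrying the compound weight $\frac{1}{d_1d_2}\log\bigl(\frac{x}{d_1}\wedge\frac{x}{d_2}\bigr)$, which is not covered by Lemma 7 as stated and is awkward because of the non-smooth ridge along $d_1=d_2$. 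The paper avoids this entirely: it splits $a(\ell_1,\ell_2)b(m_1,m_2)$ into three blocks ($b$ minus its model; $a$ minus its model, times $\log m_1\wedge m_2$; model times model), and in the middle block it swaps the order of summation so that the hypothesis on $a$ applies directly to the inner sums while Lemma 5 bounds the outer sum; the main block is then Lemma 4 followed by Lemma 6. That is the cleaner way to realize your step (2) in part (ii), and it is the only substantive difference from your plan. Your closing caveat about uniformity is apt: the stated limits alone do not furnish majorants such as $M_a(u,v)\ll uv(\log 2u)^{\alpha}(\log 2v)^{\beta}$ (the hypotheses say nothing when one argument stays bounded), and strictly speaking such control is needed; but the paper's own proof glosses over exactly the same point (for instance when it writes $\sum_{\ell}a(\ell_1,\ell_2)\,o(xy/\ell_1\ell_2)$ and pulls the $o$ outside the sum), and in every application in the paper the required majorants are available, so your argument is at the same level of rigor as the printed one.
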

\begin{proof} We first prove (i). We have
\[ \sum_{n_1 \le x,  n_2 \le y} c(n_1,n_2) =\sum_{n_1 \le x,  n_2 \le y} (a*b)(n_1,n_2) =\sum_{ \ell_1 m_1  \le x ,  \  \ell_2 m_2 \le y}  a(\ell_1,\ell_2) b(m_1,m_2)  \] 
\[= \sum_{ \ell_1 m_1 \le x,  \ \ell_2 m_2 \le y} a(\ell_1,\ell_2) \bigl( b(m_1,m_2) -B \bigr) \]
\[+ B \sum_{ \ell_1 m_1 \le x, \ \ell_2 m_2 \le y}  \ \Bigl(a(\ell_1,\ell_2)-A(\log \ell_1)^\alpha (\log \ell_2)^{ \beta} \Bigr)         \]  
\[+ AB \sum_{\ell_1 m_1 \le x, \  \ell_2 m_2 \le y}   (\log \ell_1)^\alpha (\log \ell_2)^{ \beta} =: I_1+I_2+I_3,       \] 
where, by Lemma 7 and Lemma 1, 
\begin{align*}
I_1 & = \sum_{\ell_1  \le x,   \ell_2 \le y} a(\ell_1,\ell_2) \sum_{\substack{m_1  \le x /\ell_1 \\   m_2 \le y / \ell_2} } \bigl( b(m_1,m_2) -B  \bigr) = \sum_{\ell_1  \le x,   \ell_2 \le y} a(\ell_1,\ell_2)  \ o \Bigl( \frac{xy}{\ell_1 \ell_2}  \Bigr)  \\
& =o\Bigl(xy \sum_{\ell_1  \le x,   \ell_2 \le y} \frac{A \ell_1 \ell_2 (\log \ell_1)^\alpha (\log \ell_2)^{\beta}}{\ell_1 (\ell_1+1) \ell_2 (\ell_2+1)} \Bigr)= o(xy (\log x)^{\alpha+1} (\log y)^{\beta+1}), \\
I_2 &= B \sum_{\ell_1  \le x,   \ell_2  \le y}  \bigl(a(\ell_1,\ell_2)-A(\log \ell_1)^\alpha (\log \ell_2)^{ \beta} \bigr) \sum_{m_1  \le x / \ell_1 ,   m_2 \le y / \ell_2 } 1  \\
&= B \sum_{\ell_1  \le x,   \ell_2  \le y}   \frac{xy}{\ell_1 \ell_2} \bigl(a(\ell_1,\ell_2)-A(\log \ell_1)^\alpha (\log \ell_2)^{ \beta} \bigr)  \\
&=B \sum_{\ell_1  \le x,  \ell_2  \le y}  xy \ \frac{ o \bigl( \ell_1 \ell_2 (\log \ell_1)^\alpha (\log \ell_2)^{\beta} \bigr)}{\ell_1 (\ell_1+1) \ell_2 (\ell_2+1)}   = o \bigl(xy (\log x)^{\alpha+1} (\log y)^{\beta+1} \bigr), \\
{\rm{and}} \qquad & \\
I_3 &=AB \sum_{\ell_1 \le x, \ell_2  \le y}   (\log \ell_1)^\alpha (\log \ell_2)^{ \beta} \sum_{m_1  \le x /\ell_1 ,   m_2 \le y / \ell_2 } 1  \\
&=AB \sum_{\ell_1 \le x, \ell_2  \le y}   \frac{xy}{\ell_1 \ell_2} (\log \ell_1)^\alpha (\log \ell_2)^{ \beta}  \\
&= \frac{AB}{(\alpha+1)(\beta+1)} xy (\log x )^{\alpha+1} (\log y)^{\beta+1}+o \bigl(xy (\log x)^{\alpha+1} (\log y)^{\beta+1} \bigr). \end{align*}
Therefore (\ref{eq:lem8i}) holds. This proves (i).

Next we prove (ii). Similarly we have
\[ \sum_{n_1, \ n_2 \le x} c(n_1,n_2) =\sum_{\ell_1 m_1 \le x,  \  \ell_2 m_2 \le x} a(\ell_1,\ell_2) \ b(m_1,m_2)   \] 
\[= \sum_{\ell_1 m_1 \le x,   \ell_2 m_2 \le x} a(\ell_1,\ell_2) \bigl( b(m_1,m_2) -B \log m_1 \wedge m_2  \bigr)       \hspace{3cm}         \]  
\[+ B \sum_{ \ell_1 m_1 \le x,  \ell_2 m_2 \le x}  \bigl(a(\ell_1,\ell_2)-A(\log \ell_1)^\alpha (\log \ell_2)^{\beta} \bigr) \log m_1 \wedge m_2                 \] 
\[+ AB \sum_{ \ell_1 m_1 \le x,  \ell_2 m_2 \le x}   (\log \ell_1)^\alpha (\log \ell_2)^{\beta} \log m_1 \wedge m_2 =:J_1+J_2+J_3.            \] 
Firstly we have
\begin{align*}
J_1 &= \sum_{\ell_1 ,  \ell_2 \le x} a(\ell_1,\ell_2) \sum_{m_1 \le x / \ell_1 ,   m_2 \le x / \ell_2 } \bigl( b(m_1,m_2) -B \log m_1 \wedge m_2  \bigr)   \\
&= \sum_{\ell_1 ,  \ell_2 \le x} a(\ell_1,\ell_2)  \ o \Bigl( \frac{x}{\ell_1} \frac{x}{\ell_2} \log \frac{x}{\ell_1} \wedge \frac{x}{\ell_2} \Bigr)    
\end{align*}
Since $\log \frac{x}{k_1} \wedge \frac{x}{k_2} \le \log x$,  we have by Lemma 7 and Lemma 1
\[ J_1 \ll o \bigl( x^2 \log x \bigr) \sum_{\ell_1 ,  \ell_2 \le x} \frac{|a(\ell_1,\ell_2)|}{\ell_1 \ell_2}   = o \bigl( x^2 (\log x)^{\alpha+\beta+3} \bigr) .         \] 
Secondly we have by Lemma 5
\begin{align*}
J_2 &= B \sum_{m_1,  m_2 \le x} \log m_1 \wedge m_2 \sum_{\ell_1  \le x / m_1 , \ell_2 \le x / m_2} \bigl(a(\ell_1,\ell_2)-A(\log \ell_1)^\alpha (\log \ell_2)^{\beta} \bigr)   \\
&= B\sum_{m_1 , m_2 \le x} (\log m_1 \wedge m_2)  \ o \Bigl( \frac{x}{m_1} \frac{x}{m_2} \bigl(\log \frac{x}{m_1} \bigr)^\alpha \bigl(\log \frac{x}{m_2} \bigr)^{\beta} \Bigr)       \\
&= o \Bigl( x^2 (\log x)^{\alpha+\beta}  \sum_{m_1 , m_2 \le x} \frac{\log m_1 \wedge m_2}{m_1 m_2}\Bigr)= o \Bigl( x^2 (\log x)^{\alpha+\beta}  \cdot \frac{1}{3} (\log x)^3 \Bigr)  \\
&= o \bigl( x^2 (\log x)^{\alpha+\beta+3} \bigr) .        
\end{align*}
Thirdly we have by Lemma 4 and Lemma 6
\begin{align*}
J_3 &= AB \sum_{\ell_1 , \ell_2  \le x}   (\log \ell_1)^\alpha (\log \ell_2)^{\beta} \sum_{m_1  \le x / \ell_1 ,  m_2 \le x / \ell_2 } \log m_1 \wedge m_2    \\
&= AB \sum_{\ell_1 , \ell_2  \le x}   (\log \ell_1)^\alpha (\log \ell_2)^{\beta} \Bigl( \frac{x}{\ell_1}  \frac{x}{\ell_2}  \ \log \frac{x}{\ell_1} \wedge \frac{x}{\ell_2} + o( \frac{x}{\ell_1}  \frac{x}{\ell_2}  \ \log \frac{x}{\ell_1} \wedge \frac{x}{\ell_2}) \Bigr)         \\ 
&= AB \frac{x^2(\log x)^{\alpha+\beta+3}}{(\alpha+1)(\beta+1) (\alpha+\beta+3)}  + o \bigl(x^2(\log x)^{\alpha+\beta+3} \bigr) .     
\end{align*}
From these estimates we have 
\[\sum_{n_1,  n_2 \le x} c(n_1,n_2) =AB \frac{x^2(\log x)^{\alpha+\beta+3}}{(\alpha+1)(\beta+1) (\alpha+\beta+3)}  + o(x^2(\log x)^{\alpha+\beta+3} ) . \]
Thus the proof of Lemma 8 is now complete.
\end{proof}

Now we can prove Theorem 2. 
\begin{proof}[Proof of Theorem $2$] We first prove (i). We proceed by induction on $k$.
If $k=1$, then (\ref{eq:th2i}) holds by Theorem 1 in Ushiroya \cite{u}.
Let $k \ge 2$ and suppose that (\ref{eq:th2i}) holds for $k-1$ instead of $k$. 
We put $g=f * \mu_k$ and $h= g * \tau_{k-1}.$ Since
\[\sum_{n_1,n_2=1}^\infty \frac{|g(n_1,n_2)|}{n_1 n_2}=\sum_{n_1,n_2=1}^\infty \frac{|h*\mu_{k-1}(n_1,n_2)|}{n_1 n_2} < \infty \]
holds by the induction hypothesis, we obtain
\[ \lim_{x,y \to \infty} \frac{1}{xy (\log x \log y )^{k-2}  } \sum_{n_1 \le x  ,  n_2 \le y} h (n_1,n_2)  = C_{k-1} \sum_{n_1,  n_2=1}^{\infty} \frac{g(n_1,n_2)}{n_1 n_2}.      \] 
Since $f=h* \bold{1},$  we have by taking $a= h$, $b=\bold{1} $ and $\alpha=\beta=k-2$ in Lemma 8(i)
\[ \lim_{x,y \to \infty} \frac{1}{xy (\log x  \log y)^{k-1}  } \sum_{n_1 \le x  ,  n_2 \le y} f (n_1,n_2) \]
\[  = \frac{1}{(k-1)^2} C_{k-1} \sum_{n_1,  n_2=1}^{\infty} \frac{g(n_1,n_2)}{n_1 n_2}= C_k \sum_{n_1,  n_2=1}^{\infty} \frac{g(n_1,n_2)}{n_1 n_2}.          \] 
This proves (i).

Next we prove (ii). Similarly we proceed by induction on $k$. If $k=1$, then (\ref{eq:th2ii})  holds by Theorem 1. Let $k \ge 2$ and suppose that (\ref{eq:th2ii}) holds for $k-1$ instead of $k$. We put $g=f * \tilde{\mu}_k$ and $h= g * \tau_{k-1}.$  Since
\[ \sum_{n_1,n_2=1}^\infty \frac{|g(n_1,n_2)|}{n_1 n_2}=\sum_{n_1,n_2=1}^\infty \frac{|h*\mu_{k-1} (n_1,n_2)|}{n_1 n_2} < \infty, \]
we have by Theorem 2(i)
\[ \lim_{x,y \to \infty} \frac{1}{xy (\log x \log y )^{k-2}  } \sum_{n_1 \le x  ,  n_2 \le y} h (n_1,n_2)   =  C_{k-1} \sum_{n_1, \ n_2=1}^{\infty} \frac{g(n_1,n_2)}{n_1 n_2} .                \] 
Since $f=h* \tilde{\tau}_1,$ we have by taking $a= h$, $b=\tilde{\tau}_1 $ and $\alpha=\beta=k-2$ in Lemma 8(ii)
\[  \lim_{x \to \infty} \frac{1}{x^2 (\log x )^{2k-1}  } \sum_{n_1  ,  n_2 \le x} f (n_1,n_2) \]
\[=\frac{1}{(k-1)^2 (2k-1)} \frac{ C_{k-1}}{\zeta(2)} \sum_{n_1, \ n_2=1}^{\infty} \frac{g(n_1,n_2)}{n_1 n_2}   = \tilde{C}_k  \sum_{n_1, n_2=1}^{\infty} \frac{g(n_1,n_2)}{n_1 n_2}.            \] 
Thus the proof of Theorem 2 is now complete.
\end{proof}

\section{Multiplicative Case}

We say that $f$ is a multiplicative function of two variables if $f$ satisfies
\begin{equation}
\nonumber
f(m_1 n_1, m_2  n_2)=f(m_1 , m_2 )  \ f( n_1,n_2) 
\end{equation}
for any $m_1 ,m_2,  n_1, n_2  \in  \ \mathbb{N} $ satisfying $\gcd (m_1 m_2, \  n_1 n_2)=1 .$
It is well known that if $f$ and $g$ are multiplicative functions of two variables, then $f*g$ also becomes a multiplicative function of two variables. The next theorem is an extension of van der Corput's theorem (1.2) to the case in which $f$ is a multiplicative function of two variables.

\begin{theorem} Let $f$ be a multiplicative function of two variables and let $k \in \mathbb{N}$. \\
{\rm{(i)}} Suppose
\begin{equation}
\displaystyle  \sum_{p \in \mathcal{P}} \sum_{\substack{\nu_1 , \nu_2 \ge 0 \\ \nu_1 + \nu_2 \ge 1}} \frac{| (f*\mu_k) (p^{\nu_1} , p^{\nu_2} ) |}{p^{\nu_1+\nu_2}} < \infty .  \label{eq:th3(i)1} 
\end{equation}
Then we have  
\begin{equation}
\lim_{x,  y \to \infty} \frac{1}{x y (\log x \log y)^{k-1} } \sum_{\substack{n_1 \le x \\  n_2 \le y}} f(n_1,n_2)= C_k \prod_{p \in \mathcal{P}} \Bigl(1-\frac{1}{p} \Bigr)^{2k}  \Bigl(\sum_{ \nu_1,  \nu_2 \ge 0} \frac{ f (p^{\nu_1} , p^{\nu_2} )}{p^{\nu_1+\nu_2}} \Bigr),  \label{eq:th3(i)}              
\end{equation}
where $\displaystyle C_k=\frac{1}{((k-1)!)^2} $. \\ \\
{\rm{(ii)}} Suppose 
\begin{equation}
\sum_{p \in \mathcal{P}} \sum_{\substack{\nu_1 , \nu_2 \ge 0 \\ \nu_1 + \nu_2 \ge 1}} \frac{| (f* \tilde{\mu}_k) (p^{\nu_1} , p^{\nu_2} ) |}{p^{\nu_1+\nu_2}} < \infty .  \label{eq:th3(ii)1}  
\end{equation}
Then we have 
\begin{equation}
\lim_{x \to \infty} \frac{1}{x^2 (\log x)^{2k-1} } \sum_{n_1 \le x }  f(n_1,n_2)=  \tilde{C}_k' \prod_{p \in \mathcal{P}} \Bigl(1-\frac{1}{p} \Bigr)^{2k+1}  \Bigl(\sum_{ \nu_1,  \nu_2 \ge 0} \frac{ f (p^{\nu_1} , p^{\nu_2} )}{p^{\nu_1+\nu_2}} \Bigr), 
\end{equation}
where $\displaystyle \tilde{C}_k'= \zeta(2) \tilde{C}_k=\frac{1}{((k-1)!)^2 (2k-1)}. $
\end{theorem}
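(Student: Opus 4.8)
\medskip
\noindent\textbf{Proof proposal.} The plan is to derive Theorem 3 from Theorem 2 by rewriting the double Dirichlet series occurring there as an Euler product over primes, which is legitimate because every function in sight is multiplicative. To begin, recall that $\mathbf{1}$ and $\gcd$ are multiplicative functions of two variables; hence so are $\tau_k$, its Dirichlet inverse $\mu_k$, the inverse $\tilde\mu$ of $\gcd$, and $\tilde\mu_k=\mu_{k-1}*\tilde\mu$ (with $\tilde\mu_1=\tilde\mu$). Consequently $g:=f*\mu_k$ and $\tilde g:=f*\tilde\mu_k$ are multiplicative, and so are $|g|$ and $|\tilde g|$.

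First I would check that the hypotheses (5.1) and (5.3) are exactly the absolute-convergence conditions (3.3) and (3.5) required by Theorem 2. For any multiplicative two-variable function $h$, factoring each pair $(n_1,n_2)$ into its prime components gives
\[\sum_{n_1,n_2=1}^{\infty}\frac{|h(n_1,n_2)|}{n_1n_2}=\prod_{p\in\mathcal{P}}\Bigl(\sum_{\nu_1,\nu_2\ge0}\frac{|h(p^{\nu_1},p^{\nu_2})|}{p^{\nu_1+\nu_2}}\Bigr)=\prod_{p\in\mathcal{P}}(1+s_p),\]
where $s_p=\sum_{\nu_1+\nu_2\ge1}|h(p^{\nu_1},p^{\nu_2})|/p^{\nu_1+\nu_2}\ge0$; as usual the two sides are finite precisely when $\sum_p s_p<\infty$. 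Taking $h=g$ turns (5.1) into (3.3), and $h=\tilde g$ turns (5.3) into (3.5), so Theorem 2 applies. Writing $E_p(h):=\sum_{\nu_1,\nu_2\ge0}h(p^{\nu_1},p^{\nu_2})/p^{\nu_1+\nu_2}$ for the local Euler factor, Theorem 2, together with the same prime factorization (now without absolute values) applied to the double series appearing in its conclusion, then gives
\[\lim_{x,y\to\infty}\frac{1}{xy(\log x\log y)^{k-1}}\sum_{n_1\le x,\,n_2\le y}f(n_1,n_2)=C_k\prod_{p\in\mathcal{P}}E_p(g),\]
and, for part (ii) (where I read the sum in the statement as $\sum_{n_1,n_2\le x}f(n_1,n_2)$),
\[\lim_{x\to\infty}\frac{1}{x^2(\log x)^{2k-1}}\sum_{n_1,n_2\le x}f(n_1,n_2)=\tilde{C}_k\prod_{p\in\mathcal{P}}E_p(\tilde g).\]
Here each $E_p(f)$ converges absolutely because $f=g*\tau_k$ on prime-power arguments and $E_p(\tau_k)<\infty$.

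It remains to evaluate the local factors. Since the $p$-part of a convolution is the convolution of the $p$-parts, $E_p(a*b)=E_p(a)E_p(b)$ whenever the series converge absolutely. From $E_p(\mathbf{1})=(1-1/p)^{-2}$ one gets $E_p(\tau_k)=(1-1/p)^{-2k}$, hence $E_p(\mu_k)=E_p(\tau_k)^{-1}=(1-1/p)^{2k}$, so $E_p(g)=(1-1/p)^{2k}E_p(f)$; this is formula (5.2). For part (ii) one more evaluation is needed: summing $\sum_{\nu_1,\nu_2\ge0}p^{\min(\nu_1,\nu_2)-\nu_1-\nu_2}$ by isolating the diagonal gives $E_p(\gcd)=(1+1/p)/(1-1/p)^2$, whence $E_p(\tilde\mu)=E_p(\gcd)^{-1}=(1-1/p)^2/(1+1/p)$ and
\[E_p(\tilde\mu_k)=(1-1/p)^{2(k-1)}\cdot\frac{(1-1/p)^2}{1+1/p}=\frac{(1-1/p)^{2k}}{1+1/p}=\frac{(1-1/p)^{2k+1}}{1-1/p^2}.\]
Therefore $E_p(\tilde g)=\dfrac{1}{1-1/p^2}\,(1-1/p)^{2k+1}E_p(f)$, and passing to partial products over $p\le P$, in which $\prod_{p\le P}(1-1/p^2)^{-1}\to\zeta(2)$, yields $\prod_{p}E_p(\tilde g)=\zeta(2)\prod_{p}(1-1/p)^{2k+1}E_p(f)$. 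Multiplying by $\tilde{C}_k$ and using $\tilde{C}_k'=\zeta(2)\tilde{C}_k$ gives formula (5.6).

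The main obstacle is not any individual computation but the careful handling of absolute convergence that underlies them: that the double Dirichlet series of a multiplicative two-variable function genuinely equals $\prod_p E_p(h)$, that $E_p(a*b)=E_p(a)E_p(b)$ is valid in the relevant ranges, and that the infinite products $\prod_p(1-1/p)^{2k}E_p(f)$ and $\prod_p(1-1/p)^{2k+1}E_p(f)$ converge. All of these follow from the equivalence established in the first step, together with the local factorizations $f=g*\tau_k$ and $f=\tilde g*\tilde\tau_k$; the rest is summation of geometric series.
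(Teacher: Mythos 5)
Your proposal is correct and takes essentially the same route as the paper: you show that the hypotheses of Theorem 3 imply the absolute-convergence hypotheses of Theorem 2 via the Euler-product bound for the nonnegative multiplicative function $|f*\mu_k|$ (resp.\ $|f*\tilde{\mu}_k|$), then factor the resulting double Dirichlet series over primes and evaluate the local factors of $\mu_k$ and $\tilde{\mu}_k$, the factor $\prod_{p}(1-1/p^2)^{-1}=\zeta(2)$ accounting for $\tilde{C}_k'=\zeta(2)\tilde{C}_k$. The only cosmetic difference is that you compute the local factor of $\mu_k$ as the reciprocal of that of $\tau_k$ (legitimate, since $\mu_k(p^{\nu_1},p^{\nu_2})$ vanishes once $\nu_1$ or $\nu_2$ exceeds $k$, so its local series is a finite sum), whereas the paper inserts the explicit binomial values from Lemma 9; your inversion of the local factor of $\gcd$ to get that of $\tilde{\mu}$ is exactly the paper's argument.
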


\begin{remark} \rm{In part} $\mathrm{(ii)}$, we do not deal with: \\
$ \lim_{x,y \to \infty} (xy (\log x  \log y)^{k-1} \log x \wedge y )^{-1} \sum_{n_1 \le x ,  n_2 \le y} f (n_1,n_2)$  since it is too complicated and we cannot obtain a simple formula.
\end{remark}

Before we prove Theorem 3, we give lemmas needed later.

\begin{lemma}[S$\acute{\rm{a}}$ndor and Crstici {\cite{sa}} p.107] For $k \in \mathbb{N}$ and $p \in \mathcal{P}$, we have
\[ \mu_k (p^{\nu_1}, p^{\nu_2}) = \left\{ \begin{array}{ll} (-1)^{\nu_1+\nu_2} \ \displaystyle{\binom{k}{\nu_1} \binom{k}{\nu_2} }  \ \ & if \ \ \nu_1, \ \nu_2 \le k, \vspace{0.1cm} \\ 0   &otherwise ,  \vspace{0.cm} \\  \end{array}  \right.      \] 
where $\binom{k}{\nu}$ is a binomial coefficient. 
\end{lemma}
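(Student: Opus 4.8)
The plan is to factor $\mu_k$ into a product of two one-variable factors and then invoke the classical one-variable evaluation. Concretely, I would first prove by induction on $k$ that
\[ \mu_k(n_1,n_2)=g_k(n_1)\,g_k(n_2)\quad\text{for all }n_1,n_2\in\mathbb{N}, \]
where $g_k$ denotes the $k$-fold Dirichlet convolution of the one-variable M\"obius function $\mu$ (the one-variable $\mu_k$ of the Introduction). The base case $k=1$ is the identity $\mu_1(n_1,n_2)=\mu(n_1)\mu(n_2)$ already recorded in Section~2. For the inductive step, note that $\tau_k=\tau_{k-1}*\mathbf{1}$ gives $\mu_k=\tau_k^{-1}=\mu_{k-1}*\mu_1$, and that the two-variable Dirichlet convolution of $(n_1,n_2)\mapsto u(n_1)v(n_2)$ with $(n_1,n_2)\mapsto u'(n_1)v'(n_2)$ equals $(n_1,n_2)\mapsto(u*u')(n_1)(v*v')(n_2)$ for any one-variable functions $u,v,u',v'$ (immediate from the definition, by separating the divisor sum over $n_1$ from that over $n_2$). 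Combining the inductive hypothesis for $\mu_{k-1}$ with $\mu_1(n_1,n_2)=\mu(n_1)\mu(n_2)$ then gives $\mu_k(n_1,n_2)=(g_{k-1}*\mu)(n_1)\,(g_{k-1}*\mu)(n_2)=g_k(n_1)g_k(n_2)$, completing the induction.

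It remains to evaluate $g_k$ at prime powers, which is standard. The Bell series at $p$ of the one-variable constant function $1$ is $\sum_{\nu\ge0}x^\nu=(1-x)^{-1}$, so the Bell series of $\mu$ is $1-x$ and that of $g_k=\mu^{*k}$ is $(1-x)^k=\sum_{\nu=0}^k(-1)^\nu\binom{k}{\nu} x^\nu$, a polynomial of degree $k$. Reading off the coefficient of $x^\nu$ yields $g_k(p^\nu)=(-1)^\nu\binom{k}{\nu}$ for $0\le\nu\le k$ and $g_k(p^\nu)=0$ for $\nu>k$. Substituting into $\mu_k(p^{\nu_1},p^{\nu_2})=g_k(p^{\nu_1})g_k(p^{\nu_2})$ produces exactly $(-1)^{\nu_1+\nu_2}\binom{k}{\nu_1}\binom{k}{\nu_2}$ when $\nu_1,\nu_2\le k$ and $0$ otherwise, which is the claimed formula (and holds for all $\nu_1,\nu_2\ge0$).

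There is no genuine obstacle; the formula is in any case a known one, quoted from S\'andor and Crstici \cite{sa}. The only steps that deserve a line of justification are the elementary ``separation'' identity for the two-variable convolution and the classical Bell-series evaluation of $g_k$ at prime powers; and if one prefers to avoid Bell series altogether, the values $g_k(p^\nu)=(-1)^\nu\binom{k}{\nu}$ can instead be obtained by a second induction on $k$ from $g_k=g_{k-1}*\mu$ together with Pascal's rule $\binom{k-1}{\nu-1}+\binom{k-1}{\nu}=\binom{k}{\nu}$.
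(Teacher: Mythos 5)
Your proposal is correct, but note that the paper itself offers no proof of this lemma at all: it is quoted verbatim from S\'andor and Crstici \cite{sa}, p.~107, so there is nothing in the text to compare against step by step. What you supply is a complete, self-contained derivation, and both of its ingredients are sound: the separation identity $\bigl((u\cdot v)*(u'\cdot v')\bigr)(n_1,n_2)=(u*u')(n_1)\,(v*v')(n_2)$ for the two-variable Dirichlet convolution is immediate from splitting the divisor sum, and together with $\tau_k=\tau_{k-1}*\mathbf{1}$, $\mu_1(n_1,n_2)=\mu(n_1)\mu(n_2)$ it yields $\mu_k(n_1,n_2)=g_k(n_1)g_k(n_2)$ with $g_k=\mu^{*k}$ by induction (equivalently, one can observe directly that $\tau_k(n_1,n_2)=\tau_k(n_1)\tau_k(n_2)$ and that the inverse of a separable function is the product of the one-variable inverses). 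The evaluation $g_k(p^\nu)=(-1)^\nu\binom{k}{\nu}$ for $\nu\le k$ and $0$ for $\nu>k$ via the Bell series $(1-x)^k$ (or, as you note, by induction on $k$ with Pascal's rule) is the classical one-variable fact, and substituting gives exactly the stated formula, including the vanishing when $\nu_1>k$ or $\nu_2>k$. So your argument is a legitimate proof of a statement the paper merely cites; the only caution is the notational clash between the one-variable $\mu_k$ of the Introduction and the two-variable $\mu_k=\tau_k^{-1}$ of Section~2, which your notation $g_k$ handles correctly.
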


\begin{lemma} For $p \in \mathcal{P}$ we have
\[\tilde{\mu} (p^{\nu_1},p^{\nu_2})=\left\{ \begin{array}{cl} -1 &  \ \ if \ \ \nu_1+\nu_2=1, \\ 2-p & \ \ if  \ \ \nu_1=\nu_2=1, \\ p-1  &  \ \ if \ \ |\nu_1-\nu_2|=1  \ \ and \ \ \nu_1,\nu_2 \ge 1,  \\ 2-2p  &  \ \ if \ \ \nu_1=\nu_2 \ge 2,  \\ 0 & \ \ otherwise. \\ \end{array}  \right.      \]
\end{lemma}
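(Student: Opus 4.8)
The plan is to reduce everything to a computation with formal power series in two variables. Since $\gcd$ is a multiplicative function of two variables and $\gcd(1,1)=1\neq 0$, its Dirichlet inverse $\tilde{\mu}$ is again multiplicative, hence determined by its values $\tilde{\mu}(p^{\nu_1},p^{\nu_2})$ at prime powers of a single prime $p$. Fix $p$ and introduce the formal power series
\[ G(X,Y)=\sum_{i,j\ge 0}\tilde{\mu}(p^i,p^j)X^iY^j,\qquad H(X,Y)=\sum_{i,j\ge 0}p^{\min(i,j)}X^iY^j , \]
the latter being the generating series of $\gcd$ at powers of $p$. Since $(a*b)(p^{\nu_1},p^{\nu_2})=\sum_{0\le i\le\nu_1,\,0\le j\le\nu_2}a(p^i,p^j)\,b(p^{\nu_1-i},p^{\nu_2-j})$ for any two arithmetic functions of two variables, the defining relation $\tilde{\mu}*\gcd=\delta$ is exactly the Cauchy-product identity $G(X,Y)H(X,Y)=1$; as $H$ has constant term $1$ it is invertible and $G=H^{-1}$.

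Next I would put $H$ into closed form. Splitting the sum according to $i\le j$ and $i>j$ and summing the resulting geometric series gives
\[ H(X,Y)=\frac{1}{1-Y}\sum_{i\ge 0}(pXY)^i+\frac{X}{1-X}\sum_{j\ge 0}(pXY)^j=\frac{1}{1-pXY}\Bigl(\frac{1}{1-Y}+\frac{X}{1-X}\Bigr)=\frac{1-XY}{(1-X)(1-Y)(1-pXY)} . \]
Hence, after expanding the polynomial $(1-X)(1-Y)(1-pXY)=1-X-Y+(1-p)XY+pX^2Y+pXY^2-pX^2Y^2$ and using $\frac{1}{1-XY}=\sum_{n\ge 0}X^nY^n$,
\[ G(X,Y)=\frac{(1-X)(1-Y)(1-pXY)}{1-XY}=\bigl(1-X-Y+(1-p)XY+pX^2Y+pXY^2-pX^2Y^2\bigr)\sum_{n\ge 0}X^nY^n . \]

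Finally I would read off the coefficient of $X^{\nu_1}Y^{\nu_2}$ from this last product. A monomial $cX^aY^b$ of the polynomial factor contributes $c$ to the coefficient of $X^{\nu_1}Y^{\nu_2}$ precisely when $\nu_1-a=\nu_2-b\ge 0$; since only exponent pairs $(a,b)$ with $a-b\in\{-1,0,1\}$ occur, $\tilde{\mu}(p^{\nu_1},p^{\nu_2})=0$ whenever $|\nu_1-\nu_2|\ge 2$, which is the ``otherwise'' case. For $|\nu_1-\nu_2|\le 1$ one records the admissible monomials: for $\nu_1=\nu_2=0$ only the constant $1$ survives; for $\nu_1=\nu_2=1$ the monomials $1$ and $(1-p)XY$ give $2-p$; for $\nu_1=\nu_2\ge 2$ the monomials $1$, $(1-p)XY$ and $-pX^2Y^2$ give $2-2p$; for $\{\nu_1,\nu_2\}=\{1,0\}$ only $-X$ (resp.\ $-Y$) survives, giving $-1$; and for $\{\nu_1,\nu_2\}=\{n+1,n\}$ with $n\ge 1$ the monomials $-X$ and $pX^2Y$ (resp.\ their transposes) give $p-1$. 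This reproduces the stated table.

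There is no serious obstacle here: the argument is entirely a generating-function computation, and the only points needing care are the closed-form evaluation of $H$ and the bookkeeping in the final case analysis. One could instead verify $\tilde{\mu}*\gcd=\delta$ at prime powers directly by induction on $\nu_1+\nu_2$ using the tabulated values, but the generating-function route is cleaner and transparently explains the band structure $|\nu_1-\nu_2|\le 1$.
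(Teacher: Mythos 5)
Your argument is correct, but it runs in the opposite direction from the paper's. The paper takes the tabulated values as the definition of a multiplicative function $f$ and then verifies by an elementary (unwritten) calculation that $(f*\gcd)(p^a,p^b)=\delta(p^a,p^b)$ for all $a,b\ge 0$, concluding $f=\tilde{\mu}$ from the uniqueness of the Dirichlet inverse — a guess-and-verify proof. You instead derive the values: restricting the convolution identity to powers of a fixed prime turns $\tilde{\mu}*\gcd=\delta$ into the power-series identity $G(X,Y)H(X,Y)=1$, and your closed form $H(X,Y)=\dfrac{1-XY}{(1-X)(1-Y)(1-pXY)}$ is correct, as is the expansion of $G=H^{-1}$ and the coefficient bookkeeping (I checked each case; note also that multiplicativity of $\tilde{\mu}$ is not even needed here, since divisors of $(p^{\nu_1},p^{\nu_2})$ are again powers of $p$, so the identity at prime powers is self-contained). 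What your route buys is that it produces the table rather than presupposing it, and it makes the band structure $|\nu_1-\nu_2|\le 1$ transparent from the exponent pattern of the numerator polynomial; what the paper's route buys is brevity, since once the answer is known the verification is a short finite check. One small point of agreement worth keeping in mind: your computation gives coefficient $1$ at $(\nu_1,\nu_2)=(0,0)$, i.e.\ $\tilde{\mu}(1,1)=1$, which is of course forced by $\tilde{\mu}*\gcd=\delta$; the lemma's ``otherwise'' clause is to be read as covering only $\nu_1+\nu_2\ge 1$.
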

\begin{proof} Let $f$ be the multiplicative function defined by the same formulas as the above. Then, by an elementary calculation, it is easy to see that 
$(f * \gcd) (p^a,p^b)=\delta(p^a,p^b) $ holds for every $a,b \ge 0$. By the uniqueness of the Dirichlet inverse of the $\gcd$ function, we have $f=\tilde{\mu}$.
\end{proof}

Now we can prove Theorem 3.
\begin{proof}[Proof of Theorem 3] We first prove (i). 
Since the function: $ (n_1,n_2) \mapsto \frac{(f*\mu_k) (n_1,n_2)}{ n_1 n_2}$ is multiplicative, we have
\[ \sum_{n_1 \le x ,  n_2 \le y}  \frac{|(f*\mu_k) (n_1,n_2)|}{n_1 n_2} \le \prod_{p \in \mathcal{P}} \Bigl(\sum_{\nu_1, \ \nu_2 \ge 0} \frac{1}{p^{\nu_1+\nu_2}} |(f* \mu_k) (p^{\nu_1}, \ p^{\nu_2})|  \Bigr)       \]
\vspace{-0.5cm}
\begin{align*}
& =  \prod_{p \in \mathcal{P}} \Bigl(1+\sum_{\nu_1+\nu_2 \ge 1} \frac{1}{p^{\nu_1+\nu_2}} |(f* \mu_k) (p^{\nu_1}, \ p^{\nu_2})|  \Bigr)       \\
& \le  \exp \Bigl(\sum_p \Bigl(\sum_{\nu_1+ \nu_2 \ge 1} \frac{1}{p^{\nu_1+\nu_2}} |(f* \mu_k) (p^{\nu_1}, \ p^{\nu_2})|  \Bigr) \Bigr)  < \infty,        
\end{align*}
where we have used the well known inequality $1+x \le \exp (x)$ for $x\ge 0$. Therefore (\ref{eq:th2i}) holds by Theorem 2(i). On the other hand, using Lemma 9 we have
\[\sum_{ \nu_1,  \nu_2 \ge 0} \frac{ (f* \mu_k) (p^{\nu_1} , p^{\nu_2} ) }{p^{\nu_1+\nu_2}}=\sum_{ a_1,a_2,b_1, b_2 = 0}^{\infty} \frac{ f (p^{a_1} , p^{a_2} )  \ \mu_k (p^{b_1} , p^{b_2} ) }{p^{a_1+b_1+a_2+b_2}}  \]
\[= \sum_{ a_1,a_2= 0}^{\infty} \frac{ f (p^{a_1} , p^{a_2} )   }{p^{a_1+a_2}}  \sum_{ b_1,b_2= 0}^{k} \frac{ (-1)^{b_1+b_2} \ \displaystyle{\binom{k}{b_1} \binom{k}{b_2} }  }{p^{b_1+b_2}} = \sum_{ a_1,a_2= 0}^{\infty} \frac{ f (p^{a_1} , p^{a_2} )   }{p^{a_1+a_2}}  \Bigl(1-\frac{1}{p} \Bigr)^{2k}.           \] 
Hence the right side of (\ref{eq:th2i}) is equal to
\[C_k \prod_{p \in \mathcal{P}} \Bigl(1-\frac{1}{p} \Bigr)^{2k}  \Bigl(\sum_{ \nu_1,  \nu_2 \ge 0} \frac{ f (p^{\nu_1} , p^{\nu_2} )}{p^{\nu_1+\nu_2}} \Bigr). \]
This proves (i). 

Next we prove (ii). Similarly we have
\[ \sum_{m_1 ,m_2 \le x} \frac{|(f* \tilde{\mu}_k) (m_1,m_2)|}{m_1 m_2}  \le  \prod_{p \in \mathcal{P}} \Bigl(\sum_{\nu_1,  \nu_2 \ge 0} \frac{1}{p^{\nu_1+\nu_2}} |(f* \tilde{\mu}_k) (p^{\nu_1},  p^{\nu_2})|  \Bigr) \]
\vspace{-0.4cm}
\begin{align*}
& \le \prod_{p \in \mathcal{P}} \Bigl(1+\sum_{\nu_1+ \nu_2 \ge 1} \frac{1}{p^{\nu_1+\nu_2}} |(f* \tilde{\mu}_k) (p^{\nu_1},  p^{\nu_2})|  \Bigr)         \\
& \le \exp \Bigl(\sum_{p \in \mathcal{P}} \Bigl(\sum_{\nu_1+ \nu_2 \ge 1} \frac{1}{p^{\nu_1+\nu_2}} |(f* \tilde{\mu}_k) (p^{\nu_1},  p^{\nu_2})|  \Bigr) \Bigr)  < \infty.    
\end{align*}
Therefore (\ref{eq:th2ii}) holds by Theorem 2(ii). On the other hand, we have
\[\sum_{ \nu_1,  \nu_2 \ge 0} \frac{ (f* \tilde{\mu}_k) (p^{\nu_1} , p^{\nu_2} ) }{p^{\nu_1+\nu_2}} =\sum_{ a_1,a_2= 0}^{\infty} \frac{ f (p^{a_1} , p^{a_2} )   }{p^{a_1+a_2}} \sum_{ b_1,b_2= 0}^{\infty} \frac{  \tilde{\mu}_k (p^{b_1} , p^{b_2} ) }{p^{b_1+b_2}}.            \] 
If $k \ge 2$, then noting that $\tilde{\mu}_k=\mu_{k-1} * \tilde{\mu}$ we have 
\[\sum_{ b_1,b_2= 0}^{\infty} \frac{  \tilde{\mu}_{k} (p^{b_1} , p^{b_2} ) }{p^{b_1+b_2}} = \sum_{ c_1,c_2,d_1,d_2= 0}^{\infty} \frac{ \mu_{k-1} (p^{c_1} , p^{c_2} )   }{p^{c_1+c_2}}  \frac{  \tilde{\mu} (p^{d_1} , p^{d_2} ) }{p^{d_1+d_2}}  \]
\vspace{-0.4cm}
\begin{align*}
& =\sum_{c_1,c_2=0}^{k} \frac{(-1)^{c_1+c_2}}{p^{c_1+c_2}} \binom{k-1}{c_1} \binom{k-1}{c_2} \sum_{ d_1,d_2= 0}^{\infty} \frac{  \tilde{\mu} (p^{d_1} , p^{d_2} ) }{p^{d_1+d_2}}  \\
& = \Bigl(1-\frac{1}{p} \Bigr)^{2(k-1)} \sum_{ d_1,d_2= 0}^{\infty} \frac{  \tilde{\mu} (p^{d_1} , p^{d_2} ) }{p^{d_1+d_2}} .     
\end{align*}
Using the relation $\tilde{\mu} * \mathrm{gcd}= \delta $ we have
\[ \Bigl(\sum_{ d_1, d_2 = 0}^{\infty} \frac{  \tilde{\mu} (p^{d_1} , p^{d_2} ) }{p^{d_1+d_2}}\Bigr) \Bigl(\sum_{ d_1,  d_2 = 0}^{\infty} \frac{  \mathrm{gcd} (p^{d_1} , p^{d_2} ) }{p^{d_1+d_2}}  \Bigr) = 1,                  \] 
where, by an elementary calculation, we can easily derive
\[ \sum_{ d_1, d_2 = 0}^{\infty} \frac{  \mathrm{gcd} (p^{d_1} , p^{d_2} ) }{p^{d_1+d_2} } = \sum_{ d_1, d_2 = 0}^{\infty} \frac{  p^{d_1 \wedge d_2}  }{p^{d_1+d_2} } = \frac{1-\frac{1}{p^2}}{(1-\frac{1}{p})^3}.  \] 
Therefore we have obtained the following two formulas.
\begin{align}
\sum_{ b_1, b_2= 0}^{\infty} \frac{  \tilde{\mu} (p^{b_1} , p^{b_2} ) }{p^{b_1+b_2}} & =  \frac{\bigl(1-\frac{1}{p} \bigr)^3}{1-\frac{1}{p^2}}, \label{eq:sum tilde mu_k}   \\
\sum_{ b_1, b_2= 0}^{\infty} \frac{  \tilde{\mu}_k (p^{b_1} , p^{b_2} ) }{p^{b_1+b_2}} & = \Bigl(1-\frac{1}{p} \Bigr)^{2(k-1)}  \frac{\bigl(1-\frac{1}{p} \bigr)^3}{1-\frac{1}{p^2}}=  \frac{(1-\frac{1}{p} )^{2k+1} }{1-\frac{1}{p^2}}  \quad {\rm{if}} \quad k \ge 2 . \nonumber
\end{align}
Hence we see that, for every $k \in \mathbb{N}$, the right side of (\ref{eq:th2ii}) equals
\begin{align*}
\tilde{C}_k \prod_{p \in \mathcal{P}} \Bigl( \sum_{ \nu_1,  \nu_2 \ge 0} \frac{ (f* \tilde{\mu}_k) (p^{\nu_1} , p^{\nu_2} ) }{p^{\nu_1+\nu_2}}\Bigr)
 & =\tilde{C}_k \prod_{p \in \mathcal{P}} \Bigl(\sum_{ a_1,a_2= 0}^{\infty} \frac{ f (p^{a_1} , p^{a_2} )   }{p^{a_1+a_2}} \Bigr)  \frac{(1-\frac{1}{p} )^{2k+1} }{1-\frac{1}{p^2}}  \\   
& =\tilde{C}_k ' \prod_{p \in \mathcal{P}} \Bigl(1-\frac{1}{p} \Bigr)^{2k+1} \Bigl(\sum_{ \nu_1,\nu_2= 0}^{\infty} \frac{ f (p^{\nu_1} , p^{\nu_2} )   }{p^{\nu_1+\nu_2}} \Bigr) ,  
\end{align*} 
where $\tilde{C}_k '=\zeta(2) \tilde{C}_k$. 
Thus the proof of Theorem 3 is now complete.
\end{proof}

It is well known (Schwarz and Spilker \cite{sc}) that if $f: \mathbb{N} \mapsto \mathbb{C}$ is a multiplicative function satisfying 
$\sum_{p \in \mathcal{P}} ( |f(p)-1| /p + \sum_{\nu \ge 2} f(p^\nu) / p^\nu )<\infty,$ then the mean value $M(f)=\lim_{x \to \infty} x^{-1} \sum_{n \le x} f(n)$ exists and equals
$\prod_{p \in \mathcal{P}} (1-1/p ) (\sum_{ \nu  \ge 0}  f (p^{\nu} ) / p^{\nu} ) . $
The following theorem is a generalization of this result.

\begin{theorem} Let $f$ be a multiplicative function of two variables and let $k \in \mathbb{N}$. \\
{\rm{(i)}} Suppose
\begin{equation}
 \sum_{p \in \mathcal{P}} \Bigl( \frac{|f(p,1)-k|+|f(1,p)-k|}{p} + \sum_{ \nu_1 +\nu_2 \ge 2} \frac{|f(p^{\nu_1} , p^{\nu_2} )|}{p^{\nu_1+\nu_2}} \Bigr) < \infty .   \label{eq:th4i}          
\end{equation}
Then we have
\begin{equation}
\lim_{x,y \rightarrow \infty} \frac{1}{x y (\log x \log y)^{k-1} } \sum_{\substack{n_1 \le x \\  n_2 \le y}} f(n_1,n_2)=C_k  \prod_{p \in \mathcal{P}} \Bigl(1-\frac{1}{p} \Bigr)^{2k}  \Bigl(\sum_{ \nu_1,  \nu_2 \ge 0} \frac{ f (p^{\nu_1} , p^{\nu_2} )}{p^{\nu_1+\nu_2}} \Bigr) ,   \label{eq:th4(i)}
\end{equation}
where $\displaystyle C_k=\frac{1}{((k-1)!)^2}$. \\ \\
{\rm{(ii)}} Suppose
\begin{equation}
\sum_{p \in \mathcal{P}} \Bigl( \frac{|f(p,1)-k|+|f(1,p)-k|}{p}  + \frac{|f(p,p)-p|}{p^2} +\sum_{\substack{\nu_1+ \nu_2 \ge 2 \\ (\nu_1 ,\nu_2) \neq (1,1) }}  \frac{|f(p^{\nu_1} , p^{\nu_2} )|}{p^{\nu_1+\nu_2}}  \Bigr)< \infty .  \label{eq:th4(ii)1}  
\end{equation}
Then we have
\begin{equation}
\lim_{x \rightarrow \infty} \frac{1}{x^2 (\log x)^{2k-1} } \sum_{n_1 ,   n_2 \le x} f(n_1,n_2) = \tilde{C}_k ' \prod_{p \in \mathcal{P}} \Bigl(1-\frac{1}{p} \Bigr)^{2k+1}  \Bigl(\sum_{ \nu_1,  \nu_2 \ge 0} \frac{ f (p^{\nu_1} , p^{\nu_2} )}{p^{\nu_1+\nu_2}} \Bigr), \label{eq:th4(ii)}    
\end{equation}
where $\displaystyle \tilde{C}_k '=\frac{1}{((k-1)!)^2 (2k-1)} $.
\end{theorem}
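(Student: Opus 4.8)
The plan is to deduce Theorem 4 from Theorem 3. In each part the two theorems have the same left‑hand side and the same right‑hand side, so it suffices to show that hypothesis (\ref{eq:th4i}) implies hypothesis (\ref{eq:th3(i)1}), and that hypothesis (\ref{eq:th4(ii)1}) implies hypothesis (\ref{eq:th3(ii)1}); then Theorem 3 produces the stated limits directly. Throughout I allow implied constants to depend on $k$, and I use repeatedly that $\sum_{p\in\mathcal{P}}p^{-2}<\infty$ together with $f(1,1)=1$ and the trivial bounds $|f(p,1)|\le|f(p,1)-k|+k$, $|f(1,p)|\le|f(1,p)-k|+k$, $|f(p,p)|\le|f(p,p)-p|+p$.

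For part (i) put $g=f*\mu_k$. By Lemma 9 we have $\mu_k(1,1)=1$ and $\mu_k(p,1)=\mu_k(1,p)=-k$, hence $g(p,1)=f(p,1)-k$ and $g(1,p)=f(1,p)-k$, so the $\nu_1+\nu_2=1$ part of the series in (\ref{eq:th3(i)1}) is exactly the first sum in (\ref{eq:th4i}). For the remaining terms I would use $|g(p^{\nu_1},p^{\nu_2})|\le\sum_{a_i+b_i=\nu_i}|f(p^{a_1},p^{a_2})|\,|\mu_k(p^{b_1},p^{b_2})|$, reindex by $(a_1,a_2,b_1,b_2)$, and interchange summations to get
\[
\sum_{\nu_1+\nu_2\ge 2}\frac{|g(p^{\nu_1},p^{\nu_2})|}{p^{\nu_1+\nu_2}}\ \le\ \sum_{a_1,a_2\ge 0}\frac{|f(p^{a_1},p^{a_2})|}{p^{a_1+a_2}}\ \sum_{\substack{b_1,b_2\ge 0\\ b_1+b_2\ge\max\{0,\,2-a_1-a_2\}}}\frac{|\mu_k(p^{b_1},p^{b_2})|}{p^{b_1+b_2}}.
\]
By Lemma 9, $\sum_{b_1,b_2\ge 0}|\mu_k(p^{b_1},p^{b_2})|/p^{b_1+b_2}=(1+1/p)^{2k}$, so its tail over $b_1+b_2\ge 1$ is $O(1/p)$ and over $b_1+b_2\ge 2$ is $O(1/p^2)$. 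Hence the right‑hand side is $\ll p^{-2}+p^{-2}(|f(p,1)|+|f(1,p)|)+\sum_{a_1+a_2\ge 2}|f(p^{a_1},p^{a_2})|\,p^{-a_1-a_2}$, and summing over $p$ and invoking (\ref{eq:th4i}) (using $|f(p,1)-k|/p^2\le|f(p,1)-k|/p$) gives a finite total. Thus (\ref{eq:th3(i)1}) holds and Theorem 3(i) gives the claim.

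For part (ii) put $g=f*\tilde{\mu}_k$ and recall $\tilde{\mu}_1=\tilde{\mu}$ and $\tilde{\mu}_k=\mu_{k-1}*\tilde{\mu}$ for $k\ge 2$. From Lemmas 9 and 11 one computes $\tilde{\mu}_k(1,1)=1$, $\tilde{\mu}_k(p,1)=\tilde{\mu}_k(1,p)=-k$, and $\tilde{\mu}_k(p,p)=k^2+1-p$. Hence $g(p,1)=f(p,1)-k$ (matching the first sum of (\ref{eq:th4(ii)1})), while
\[
g(p,p)=\bigl(f(p,p)-p\bigr)-k\bigl(f(p,1)-k\bigr)-k\bigl(f(1,p)-k\bigr)+\bigl(1-k^{2}\bigr),
\]
which explains why $f(p,p)$ must be treated separately in (\ref{eq:th4(ii)1}); in particular $\sum_{p}|g(p,p)|/p^2$ is dominated by the terms of (\ref{eq:th4(ii)1}). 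For the terms with $\nu_1+\nu_2\ge 2$ and $(\nu_1,\nu_2)\ne(1,1)$ I would run the rearrangement of part (i) with $\mu_k$ replaced by $\tilde{\mu}_k$, for which the inputs (all obtained from Lemmas 9 and 11 by direct computation, using $\sum_{b_1,b_2\ge 0}|\mu_{k-1}(p^{b_1},p^{b_2})|/p^{b_1+b_2}=(1+1/p)^{2(k-1)}$) are
\[
\sum_{b_1,b_2\ge 0}\frac{|\tilde{\mu}_k(p^{b_1},p^{b_2})|}{p^{b_1+b_2}}=1+O(1/p),\qquad \sum_{b_1+b_2\ge 1}\frac{|\tilde{\mu}_k(p^{b_1},p^{b_2})|}{p^{b_1+b_2}}=O(1/p),
\]
\[
\sum_{\substack{b_1+b_2\ge 2\\ (b_1,b_2)\ne(1,1)}}\frac{|\tilde{\mu}_k(p^{b_1},p^{b_2})|}{p^{b_1+b_2}}=O(1/p^{2}).
\]
Feeding these in, the contribution of $f(1,1)$ is $O(p^{-2})$, that of $f(p,1)$ and $f(1,p)$ is $O(|f(p,1)|p^{-2})$, that of $f(p,p)$ is $O(|f(p,p)|p^{-3})$, and that of $f(p^{a_1},p^{a_2})$ with $a_1+a_2\ge 2$, $(a_1,a_2)\ne(1,1)$ is $O(|f(p^{a_1},p^{a_2})|p^{-a_1-a_2})$; summing over $p$ and using (\ref{eq:th4(ii)1}) with the bounds above shows the sum is finite. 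Hence (\ref{eq:th3(ii)1}) holds and Theorem 3(ii) applies.

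I expect the main obstacle to be the last displayed estimate. The point is that among the $\tilde{\mu}_k(p^{b_1},p^{b_2})$ with $b_1+b_2\ge 2$ only the single value at $(1,1)$ is of order $p$ after division by $p^{b_1+b_2}$: the near‑diagonal values $\tilde{\mu}(p^{\nu_1},p^{\nu_2})\asymp p$ (which occur only when $|\nu_1-\nu_2|\le 1$) are, for $\nu_1+\nu_2\ge 3$, divided by at least $p^{3}$ and hence contribute only $O(1/p^2)$, and convolving with $\mu_{k-1}$ (a ``$1+O(1/p)$'' factor) does not spoil this. Everything else — the prime‑power algebra for $\mu_k$ and $\tilde{\mu}_k$ and the interchanges of summation of nonnegative series — is routine, and this single estimate is exactly what forces the shape of hypothesis (\ref{eq:th4(ii)1}): the isolated summand $|f(p,p)-p|/p^2$ and the exclusion of $(\nu_1,\nu_2)=(1,1)$ from the last sum there.
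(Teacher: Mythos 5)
Your proposal is correct, but for part (ii) it takes a genuinely different route from the paper. Part (i) is essentially the paper's own argument: split the series in (\ref{eq:th3(i)1}) at $\nu_1+\nu_2=1$, note $(f*\mu_k)(p,1)=f(p,1)-k$, and bound the rest by the three-way case split $a_1+a_2=0,\,=1,\,\ge 2$ using Lemma 9. For part (ii), however, the paper only verifies the hypothesis (\ref{eq:th3(ii)1}) directly when $k=1$; for $k\ge 2$ it avoids analysing $\tilde{\mu}_k$ in absolute value altogether, instead setting $\tilde f=f*\tilde{\mu}$, checking that $\tilde f$ satisfies (\ref{eq:th4i}) with $k-1$ in place of $k$, applying part (i) to $\tilde f$, and then passing from $\tilde f$ to $f=\tilde f*\tilde{\tau}_1$ via Lemma 8(ii) together with Lemma 2, recovering the Euler product at the end from the identity (\ref{eq:sum tilde mu_k}). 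You instead verify (\ref{eq:th3(ii)1}) for all $k$ at once, using the explicit values $\tilde{\mu}_k(p,1)=-k$, $\tilde{\mu}_k(p,p)=k^2+1-p$ (your computation of $g(p,p)$ is right, and correctly explains the special role of $|f(p,p)-p|/p^2$) together with the structural estimate that $\tilde{\mu}=\mu_{k-1}^{-1}*\tilde{\mu}_k$-type band support and $O_k(p)$ values give $\sum_{b_1+b_2\ge 2,\,(b_1,b_2)\ne(1,1)}|\tilde{\mu}_k(p^{b_1},p^{b_2})|/p^{b_1+b_2}=O_k(p^{-2})$; this estimate does hold (the $(1,1)$ contribution of $\tilde{\mu}$ can only land on targets with $b_1+b_2\ge 3$ unless $c=(0,0)$, and the support of $\tilde{\mu}_k$ on each level $b_1+b_2=m$ has $O_k(1)$ points), so Theorem 3(ii) applies and yields (\ref{eq:th4(ii)}) directly, Euler product included. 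The trade-off: your route is more uniform (no reduction to part (i), no appeal to Lemmas 2 and 8(ii) or to (\ref{eq:sum tilde mu_k}) in this proof), at the cost of the extra prime-power analysis of $\tilde{\mu}_k$ that the paper deliberately sidesteps; to make it fully rigorous you should write out that band/size argument, and note that your citation of ``Lemma 11'' should read Lemma 10.
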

\begin{proof}[Proof] We first prove (i). 
We would like to show that $f$ satisfies (\ref{eq:th3(i)1}). We have      
\[\sum_{p \in \mathcal{P}} \sum_{ \nu_1 + \nu_2 \ge 1} \frac{| (f*\mu_k) (p^{\nu_1} , p^{\nu_2} ) |}{p^{\nu_1+\nu_2}} =: I_1+I_2,                    \] 
where
\begin{align*}
I_1 &= \sum_{p \in \mathcal{P}} \sum_{ \nu_1 + \nu_2 = 1} \frac{| (f*\mu_k) (p^{\nu_1} , p^{\nu_2} ) |}{p^{\nu_1+\nu_2}} =\sum_{p \in \mathcal{P}} \frac{  |(f*\mu_k) (p,1)|+|(f*\mu_k)(1,p)|}{p}     \\
&= \sum_{p \in \mathcal{P}} \frac{ |f(p,1) -k|+|f(1,p) -k|}{p} < \infty , 
\end{align*}
and
\begin{align*}
I_2 &= \sum_{p \in \mathcal{P}} \sum_{ \nu_1 + \nu_2 \ge 2} \frac{| (f*\mu_k) (p^{\nu_1} , p^{\nu_2} ) |}{p^{\nu_1+\nu_2}} = \sum_{p \in \mathcal{P}} \sum_{ a_1 + a_2+b_1+b_2 \ge 2} \frac{| f (p^{a_1} , p^{a_2} ) \mu_k (p^{b_1} , p^{b_2} )  |}{p^{a_1+a_2+b_1+b_2}} .      \\
&= \sum_{p \in \mathcal{P}} \Biggl( \sum_{\substack{ a_1 + a_2=0 \\ b_1+b_2 \ge 2}} + \sum_{\substack{ a_1 + a_2=1 \\ b_1+b_2 \ge 1}} + \sum_{\substack{ a_1 + a_2 \ge 2 \\ b_1+b_2 \ge 0}} \Biggr) \frac{| f (p^{a_1} , p^{a_2} ) \mu_k (p^{b_1} , p^{b_2} )  |}{p^{a_1+a_2+b_1+b_2}}   \\
& \ll \sum_{p \in \mathcal{P}} \Biggl(\sum_{ b_1+b_2 \ge 2} \frac{1}{p^{b_1+b_2}}+\sum_{ b_1+b_2 \ge 1}\frac{|f(p,1)|+|f(1,p)|}{p^{1+b_1+b_2}}+\sum_{\substack{a_1+ a_2 \ge 2 \\ b_1 +b_2 \ge 0 }} \frac{| f (p^{a_1} , p^{a_2} )   |}{p^{a_1+a_2+b_1+b_2}}   \Biggr)  \\
& < \infty . 
\end{align*}
Therefore $f$ satisfies (\ref{eq:th3(i)1}), and hence (\ref{eq:th4(i)}) (which is equal to (\ref{eq:th3(i)})) holds by Theorem 3(i). This proves (i).

Next we prove (ii). If $k=1$, then it is easy to see that (\ref{eq:th4(ii)1}) implies (\ref{eq:th3(ii)1}) since $(f* \tilde{\mu})(p,1)=f(p,1)-1$,   \ $(f* \tilde{\mu})(1,p)=f(1,p)-1$  \ and \ $(f* \tilde{\mu})(p,p)=f(p,p)-f(p,1)-f(1,p)+2-p$ hold by Lemma 10. 
Let $k \ge 2$. We put $\tilde{f}=f*\tilde{\mu}$. We show that $\tilde{f}$ satisfies (\ref{eq:th4i}) for $k-1$ instead of $k$. We first see that            
\[\sum_{p \in \mathcal{P}} \frac{|\tilde{f}(p,1)-(k-1)|+|\tilde{f}(1,p)-(k-1)|}{p} =\sum_{p \in \mathcal{P}}  \frac{|f(p,1)-k|+|f(1,p)-k|}{p} < \infty. \]
We also have
\[\sum_{p \in \mathcal{P}}\sum_{ \nu_1 +\nu_2 \ge 2} \frac{|\tilde{f}(p^{\nu_1} , p^{\nu_2} )|}{p^{\nu_1+\nu_2}} =\sum_{p \in \mathcal{P}} \Bigl( \sum_{ \nu_1 +\nu_2 = 2} +\sum_{ \nu_1 +\nu_2 \ge 3} \Bigr) \frac{|\tilde{f}(p^{\nu_1} , p^{\nu_2} )|}{p^{\nu_1+\nu_2}} =: J_1+J_2,   \]
where
\[J_1=\sum_{p \in \mathcal{P}} \sum_{ \nu_1 +\nu_2 = 2}  \frac{|\tilde{f}(p^{\nu_1} , p^{\nu_2} )|}{p^{\nu_1+\nu_2}} =\sum_{p \in \mathcal{P}} \frac{|\tilde{f}(p^{2} , 1 )|+|\tilde{f}(p , p )|+|\tilde{f}(1 , p^2 )|}{p^2}.   \]
Noting that $\tilde{f}(p^{2} , 1 )=f(p^{2} , 1 )-f(p,1), \ \ \tilde{f}(p , p )=f(p,p)-f(p,1)-f(1,p)+2-p \ \ $ and $\tilde{f}(1,p^{2} )=f(1,p^{2}  )-f(1,p)$ hold by Lemma 10, we have
\[J_1 \ll \sum_{p \in \mathcal{P}} \frac{|f(p^2,1)|+|f(p,1)-k |+|f(p,p)-p|+|f(1,p) -k|+|f(1,p^2)|+1}{p^2}, \]
which implies that $J_1<\infty$.

As for $J_2$, since $ |\tilde{\mu}( p^{\nu_1},p^{\nu_2})| \ll 1+p $ holds for every $\nu_1, \nu_2 \ge 0$ by Lemma 10, we have
\[J_2=\sum_{p \in \mathcal{P}} \sum_{ \nu_1 +\nu_2 \ge 3}  \frac{|\tilde{f}(p^{\nu_1} , p^{\nu_2} )|}{p^{\nu_1+\nu_2}}  =\sum_{p \in \mathcal{P}} \sum_{ a_1+a_2+b_1+b_2 \ge 3}  \frac{|f (p^{a_1} , p^{a_2} ) \tilde{\mu} (p^{b_1},p^{b_2})| }{p^{a_1+a_2+b_1+b_2}}  \] 
\[  \ll \sum_{p \in \mathcal{P}} \Bigl(\sum_{ \nu_1 +\nu_2 \ge 2} \frac{1+|f(p^{\nu_1} , p^{\nu_2} )|}{p^{\nu_1+\nu_2}} \Bigr) <\infty.       \] 
Therefore $\tilde{f}$ satisfies (\ref{eq:th4i}) for $k-1$ instead of $k$. Hence by Theorem 4(i) we have     
\[ \lim_{x,y \to \infty} \frac{1}{x y (\log x \log y)^{k-2} } \sum_{\substack{n_1 \le x \\  n_2 \le y}} \tilde{f}(n_1,n_2)= C_{k-1} \prod_{p \in \mathcal{P}} \bigl(1-\frac{1}{p} \bigr)^{2(k-1)}  \bigl(\sum_{\substack{ \nu_1 \ge 0 \\ \nu_2 \ge 0}} \frac{ \tilde{f} (p^{\nu_1} , p^{\nu_2} )}{p^{\nu_1+\nu_2}} \bigr) .          \] 
Since $f=\tilde{f}*\tilde{\tau}_1,$ we have by taking $a= \tilde{f}$, $b=\tilde{\tau}_1$ and $\alpha=\beta=k-2$ in Lemma 8(ii)
\begin{align}
& \lim_{x \to \infty} \frac{1}{x^2 (\log x)^{2k-1} } \sum_{n_1,  n_2 \le x} f(n_1,n_2)  \nonumber \\
&=  \frac{1}{(k-1)^2 (2k-1)} \frac{1}{\zeta(2)} C_{k-1} \prod_{p \in \mathcal{P}} \Bigl(1-\frac{1}{p} \Bigr)^{2(k-1)} \Bigl(\sum_{ \nu_1, \ \nu_2 \ge 0} \frac{ \tilde{f} (p^{\nu_1} , p^{\nu_2} )}{p^{\nu_1+\nu_2}} \Bigr)   \nonumber \\
&= \frac{1}{\zeta(2)}  \tilde{C}_{k} ' \prod_{p \in \mathcal{P}} \Bigl(1-\frac{1}{p} \Bigr)^{2(k-1)}  \Bigl(\sum_{ a_1,a_2,b_1,b_2  \ge 0} \frac{ f (p^{a_1} , p^{a_2} )}{p^{a_1+a_2}} \frac{ \tilde{\mu} (p^{b_1} , p^{b_2} )}{p^{b_1+b_2}}  \Bigr) .  \nonumber   
\end{align}
By (\ref{eq:sum tilde mu_k}) we see that the above equals
\begin{align*}
 \frac{1}{\zeta(2)} & \tilde{C}_{k} ' \prod_{p \in \mathcal{P}} \Bigl(1-\frac{1}{p} \Bigr)^{2(k-1)}  \frac{(1-\frac{1}{p} )^3}{1-\frac{1}{p^2}} \Bigl(\sum_{ a_1,a_2  \ge 0} \frac{ f (p^{a_1} , p^{a_2} )}{p^{a_1+a_2}}  \Bigr)   \\
= & \tilde{C}_{k} ' \prod_{p \in \mathcal{P}} \Bigl(1-\frac{1}{p} \Bigr)^{2k+1}  \Bigl(\sum_{ \nu_1, \nu_2  \ge 0} \frac{ f (p^{\nu_1} , p^{\nu_2} )}{p^{\nu_1+\nu_2}}  \Bigr)  .  
\end{align*}
Thus the proof of Theorem 4 is now complete.
\end{proof}

\section{Examples} 
Let $\omega (n)=\sum_{p|n} 1$ be the counting function of the total number of prime factors of $n$ taken without multiplicity. It is known that for a fixed positive integer $k$, $\lim_{x\to \infty} x^{-1} (\log x)^{1-k} \sum_{n \le x} k^{\omega (n)}=((k-1)!)^{-1} \prod_{p \in \mathcal{P}} (1-1/p )^{k-1} \Bigl(1+(k-1)/p \Bigr)$ (cf. Tenenbaum and Wu \cite{te} p.25). 
The following example is an extenstion of this result to the case of a function of two variables.

\begin{example} Let  $k \in \mathbb{N}$ and let $f(n_1, n_2 )=k^{\omega (n_1n_2)} $. Then we have 
\[\lim_{x,y \to \infty} \frac{1}{x y (\log x \log y)^{k-1} } \sum_{\substack{n_1 \le x \\  n_2 \le y}} f(n_1,n_2)= C_k \prod_{p \in \mathcal{P}} \Bigl(1-\frac{1}{p} \Bigr)^{2(k-1)} \Bigl(1+\frac{2(k-1)}{p}+\frac{1-k}{p^2} \Bigr),   \]
where $\displaystyle C_k=\frac{1}{((k-1)!)^2}$.
\end{example}
\begin{proof} Since $f(p^{\nu_1},p^{\nu_2})=k$ if $\nu_1+\nu_2 \ge 1 $, it is easy to see that $f$ satisfies (\ref{eq:th4i}). Therefore we can apply Theorem 4(i) to obtain
\[\lim_{x,y \to \infty} \frac{1}{x y (\log x \log y)^{k-1} } \sum_{n_1 \le x ,  n_2 \le y} f(n_1,n_2)= C_k  \prod_{p \in \mathcal{P}} \Bigl(1-\frac{1}{p} \Bigr)^{2k}  \Bigl(1+\sum_{ \nu_1+  \nu_2 \ge 1} \frac{ k}{p^{\nu_1+\nu_2}} \Bigr) \]
\[= C_k \prod_{p \in \mathcal{P}} \Bigl(1-\frac{1}{p} \Bigr)^{2k}  \Bigl(1+ \frac{k(2p-1)}{(p-1)^2} \Bigr)  \\
= C_k \prod_{p \in \mathcal{P}} \Bigl(1-\frac{1}{p} \Bigr)^{2(k-1)} \Bigl(1+\frac{2(k-1)}{p}+\frac{1-k}{p^2} \Bigr).  \]
\end{proof}

\begin{example} Let $f(q , n )=|c_q (n)|$ where $c_q (n)=\mu (q/(q,n)) \varphi (q) / \varphi ( q/(q,n) ) $ is the Ramanujan sum. Then we have 
\[\lim_{x \to \infty} \frac{1}{x^2 \log x } \sum_{n_1, n_2 \le x} f(n_1,n_2)=\prod_{p \in \mathcal{P}} \Bigl(1-\frac{3}{p^2}+\frac{2}{p^3} \Bigr).   \]
\end{example}
\begin{proof} It is easy to see that 
$f(p,1)=f(1,p)=1, \ \ f(p,p)=p-1, \\ f(p^\nu,1)=0$, \ $f (1,p^\nu)=1$ \  if \  $ \nu \ge 2, $ and
\[f(p^{\nu_1},p^{\nu_2}) = \left\{ \begin{array}{ll} \mu^2 (p^{\nu_1-\nu_2}) p^{\nu_2} &  \ \ \mathrm{if} \quad 1 \le \nu_2 <\nu_1,     \vspace{0.1cm} \\ p^{\nu_1} (1-1/p)  &  \ \ \mathrm{if} \quad 1 \le \nu_1 \le \nu_2 .     \vspace{0.cm}  \\  \end{array}  \right.  \]
From these relations, we see that $f$ satisfies (\ref{eq:th4(ii)1}) for $k=1$. After an elementary calculation we obtain
\[\sum_{ \nu_1,  \nu_2 \ge 0} \frac{ f(p^{\nu_1},p^{\nu_2})}{p^{\nu_1+\nu_2}}=\frac{p+2}{p-1} . \]
Therefore we have by (\ref{eq:th4(ii)})
\[\lim_{x \to \infty} \frac{1}{x^2 \log x \ } \sum_{n_1,  n_2 \le x} f(n_1,n_2) =\tilde{C}_1 ' \prod_{p \in \mathcal{P}} \Bigl(1-\frac{1}{p} \Bigr)^{3} \frac{p+2}{p-1} =\prod_{p \in \mathcal{P}} \Bigl(1-\frac{3}{p^2}+\frac{2}{p^3} \Bigr) . \]
\end{proof}

Next we obtain the leading coefficients in (1.3) and (1.4) using Theorem 4.

\begin{example} Let  $f(n_1, n_2 )=\sigma( \gcd (n_1,n_2)) $ where $\sigma (n)= \sum_{d|n} d$. Then we have 
\[\lim_{x \to \infty} \frac{1}{x^2 \log x } \sum_{n_1, n_2 \le x} f(n_1,n_2)=1 . \]
\end{example}
\begin{proof} Since $ f(p^{\nu_1},p^{\nu_2})=(p^{\nu_1 \wedge \nu_2 +1}-1)/(p-1) $ if $\nu_1,\nu_2 \ge 0$, it is easy to see that $f$ satisfies (\ref{eq:th4(ii)1}) for $k=1$. Therefore we can apply Theorem 4(ii) for $k=1$.
After an elementary calculation we obtain
\[\sum_{ \nu_1,  \nu_2 \ge 0} \frac{ f(p^{\nu_1},p^{\nu_2})}{p^{\nu_1+\nu_2}}=\frac{1}{(1-\frac{1}{p})^3} . \]
Therefore  we have by (\ref{eq:th4(ii)})
\[\lim_{x \to \infty} \frac{1}{x^2 \log x \ } \sum_{n_1,  n_2 \le x} f(n_1,n_2) =\tilde{C}_1 ' \prod_{p \in \mathcal{P}} \Bigl(1-\frac{1}{p} \Bigr)^{3}\frac{1}{(1-\frac{1}{p})^3}  =1 . \]
\end{proof}

\begin{example} Let  $f(n_1, n_2 )=\varphi( \gcd (n_1,n_2)) $. Then we have 
\[\lim_{x \to \infty} \frac{1}{x^2 \log x } \sum_{n_1, n_2 \le x} f(n_1,n_2)=\frac{1}{\zeta^2 (2)} . \]
\end{example}
\begin{proof} Since $f(p^{\nu_1},p^{\nu_2})=p^{\nu_1 \wedge \nu_2 } (1-1/p ) $ if  $\nu_1, \nu_2 \ge 1$, it is easy to see that $f$ satisfies (\ref{eq:th4(ii)1}) for $k=1$. Therefore we can apply Theorem 4 (ii) for $k=1$.
After an elementary calculation we obtain
\[\sum_{ \nu_1,  \nu_2 \ge 0} \frac{ f(p^{\nu_1},p^{\nu_2})}{p^{\nu_1+\nu_2}}=\frac{(1+\frac{1}{p})^2}{1-\frac{1}{p}} . \]
Therefore we have by (\ref{eq:th4(ii)})
\[\lim_{x \to \infty} \frac{1}{x^2 \log x \ } \sum_{n_1,  n_2 \le x} f(n_1,n_2) =\prod_{p \in \mathcal{P}} \Bigl(1-\frac{1}{p} \Bigr)^{3} \frac{(1+\frac{1}{p})^2}{1-\frac{1}{p}} =\prod_{p \in \mathcal{P}} \Bigl(1-\frac{1}{p^2} \Bigr)^{2}   = \frac{1}{\zeta^2 (2)}. \]
\end{proof}

The proof of the following example is similar.

\begin{example} Let 
\begin{align*}
& f_1(n_1, n_2 )=\gcd (n_1,n_2) \mu^2 (\gcd (n_1,n_2)) , \\
& f_2 (n_1, n_2 )=\gcd (n_1,n_2) \mu^2 (\mathrm{lcm} (n_1,n_2)) . 
\end{align*}
Then we have 
\begin{align*}
& \lim_{x \to \infty} \frac{1}{x^2 \log x } \sum_{n_1, n_2 \le x} f_1(n_1,n_2)=\frac{1}{\zeta^2 (2)},  \\
& \lim_{x \to \infty} \frac{1}{x^2 \log x } \sum_{n_1, n_2 \le x} f_2(n_1,n_2)=\prod_{p \in \mathcal{P}} \Bigl(1-\frac{1}{p} \Bigr)^3 \Bigl(1+\frac{3}{p} \Bigr) .  
\end{align*}
\end{example}

\begin{example} Let $\displaystyle f(n_1, n_2 )=  \frac{\phi (n_1) \phi (n_2)}{\mathrm{lcm} (n_1,n_2)}  $. Then we have 
\[\lim_{x \to \infty} \frac{1}{x^2 \log x  } \sum_{n_1, n_2 \le x} f(n_1,n_2)=\prod_{p \in \mathcal{P}} \Bigl(1-\frac{1}{p} \Bigr)^3 \Bigl(1+\frac{3}{p}+\frac{1}{p^2} \Bigr)  . \]

\end{example}
\begin{proof} Since $f(p^{\nu},1)=f(1,p^{\nu})=1-1/p $ \ if \ $\nu \ge 1$ and \\ $ f(p^{\nu_1},p^{\nu_2})=(1-1/p )^2 p^{\nu_1 \wedge \nu_2 } $ \ if \ $\nu_1, \nu_2 \ge 1$, it is easy to see that $f$ satisfies (\ref{eq:th4(ii)1}) for $k=1$. Therefore we can apply Theorem 4 (ii) for $k=1$.
After an elementary calculation we obtain
\[\sum_{ \nu_1,  \nu_2 \ge 0} \frac{ f(p^{\nu_1},p^{\nu_2})}{p^{\nu_1+\nu_2}}=1+\frac{3}{p}+\frac{1}{p^2} . \]
Therefore, using (\ref{eq:th4(ii)}) for $k=1$, we have the desired result.
\end{proof}

Next we obtain the leading coefficients in (1.5) and (1.6).

\begin{example} Let  $ s(n_1,n_2)= \sum_{d_1 | n_1, d_2 | n_2}  \gcd(d_1,d_2) $. Then we have 
\[\lim_{x \to \infty} \frac{1}{x^2 (\log x)^3 } \sum_{n_1, n_2 \le x} s(n_1,n_2)=\frac{2}{\pi^2}. \]
\end{example}
\begin{proof} Since $s=gcd* \bold{1}=\tilde{\tau}_2,$ we have $s* \tilde{\mu}_2 =\delta.$ Therefore (\ref{eq:th2(ii)1}) trivially holds for $k=2$ and (\ref{eq:th2ii}) gives
\[\lim_{x \to \infty} \frac{1}{x^2 (\log x)^3 } \sum_{n_1, n_2 \le x} s(n_1,n_2)=\tilde{C}_2 ' \sum_{n_1, n_2 \le x} \frac{\delta(n_1,n_2)}{n_1 n_2}=\frac{2}{\pi^2}.   \]
\end{proof}

\begin{example} Let  $ c(n_1,n_2)= \sum_{d_1 | n_1, d_2 | n_2}  \varphi (\gcd(d_1,d_2)) $. Then we have
\[\lim_{x \to \infty} \frac{1}{x^2 (\log x)^3 } \sum_{n_1, n_2 \le x} c(n_1,n_2)=\frac{12}{\pi^4}. \]
\end{example}

\begin{proof} we note that $c=\varphi(\gcd)* \bold{1}$.  Since $\varphi (\gcd)$ satisfies (\ref{eq:th4(ii)1}) for $k=1$ from the proof of Example 4, we see that $\varphi (\gcd)$ also satisfies (2.1) from the proofs of Theorem 4, Theorem 3 and Theorem 2. Therefore we have by Theorem 1 and Example 4
\[\lim_{x,y \to \infty} \frac{1}{xy  \log x \wedge y} \sum_{n_1 \le x ,  n_2 \le y}  \varphi(\gcd (n_1,n_2)) =\frac{1}{\zeta^2 (2)}  . \] 
Taking $a=\bold{1}, \ b=\varphi(\gcd)$ and $\alpha=\beta=0$ in Lemma 8(ii), we have
\[\lim_{x \to \infty} \frac{1}{x^2 (\log x)^3 } \sum_{n_1, n_2 \le x} c(n_1,n_2)=\frac{1}{3}\frac{1}{\zeta^2 (2)}=\frac{12}{\pi^4}.    \]
\end{proof}

\begin{remark} \rm{According to Novak and T$\acute{\rm{o}}$th} \cite{no}, \rm{it holds that} $c(p,1)=c(1,p)=2$, $c(p,p)=p+2$, 
$c(p^a,1)=c(1,p^a)=a+1$ \ if \ $a \ge 1$, \ and, moreover, \\
$c(p^a,p^b)=2(1+p+p^2+ \ldots + p^{a-1})+(b-a+1)p^a \quad \mathrm{if} \quad 1 \le a \le b . $
Using this explicit formulas we can directly show that $c$ satisfies (\ref{eq:th4(ii)1}) for $k=2$ and also can directly calculate (\ref{eq:th4(ii)}). However, we did not prove in that way for simplicity. 
\end{remark}

\begin{example}
Let $ A(n_1, n_2 )=  \sum_{d_1 | n_1,  \ d_2 | n_2}  \phi (d_1) \phi (d_2) /\mathrm{lcm} (d_1,d_2) . $
Then we have 
\begin{equation}
\lim_{x \to \infty} \frac{1}{x^2 (\log x)^3 } \sum_{n_1, n_2 \le x} A(n_1,n_2)=\frac{1}{3} \prod_{p \in \mathcal{P}} \Bigl(1-\frac{1}{p} \Bigr)^3 \Bigl(1+\frac{3}{p}+\frac{1}{p^2} \Bigr). \label{eq:ex9} 
\end{equation}
\end{example}
\begin{proof} Let $g(n_1,n_2)=\phi (n_1) \phi (n_2) / \mathrm{lcm} (n_1,n_2)$. Since $A=g* \bold{1},$ by a similar argument as in Example 8, we see that the left side of (\ref{eq:ex9}) equals
$$ \frac{1}{3} \lim_{x \to \infty} \frac{1}{x^2 \log x } \sum_{n_1, n_2 \le x} g(n_1,n_2) . $$
By Example 6, it is easy to see that the above equals the right side of (\ref{eq:ex9}).
\end{proof}


\end{document}